\newtheorem{theorem}{Theorem}[section]
\newtheorem{lemma}[theorem]{Lemma}
\newtheorem{cor}[theorem]{Corollary}
\newtheorem{observation}[theorem]{Observation}
\newtheorem{proposition}[theorem]{Proposition}
\theoremstyle{definition}
\newcommand{\eps}{\varepsilon}
\title{A remark on the independence number of sparse random Cayley sum graphs}
\author{
Rajko Nenadov\thanks{School of Computer Science, University of Auckland, New Zealand. Email: \texttt{rajko.nenadov@auckland.ac.nz}. Research supported by the Marsden Fund of the Royal Society of New Zealand.}
}
\date{}
\begin{document}

\maketitle

\begin{abstract}
The Cayley sum graph $\Gamma_S$ of a set $S \subseteq \mathbb{Z}_n$ is defined on the vertex set $\mathbb{Z}_n$, with an edge between distinct $x, y \in \mathbb{Z}_n$ if $x + y \in S$.  Campos, Dahia, and Marciano have recently shown that if $S$ is formed by taking each element in $\mathbb{Z}_n$ independently with probability $p$, for $p > (\log n)^{-1/80}$, then with high probability the largest independent set in $\Gamma_S$ is of size 
$$
    (2 + o(1)) \log_{1/(1-p)}(n).
$$
This extends a result of Green and Morris, who considered the case $p = 1/2$, and asymptotically matches the independence number of the binomial random graph $G(n,p)$. We improve the range of $p$ for which this holds to $p > (\log n)^{-1/3 + o(1)}$. The heavy lifting has been done by Campos, Dahia, and Marciano, and we show that their key lemma can be used a bit more efficiently.
\end{abstract}

\section{Introduction}

The Cayley sum graph $\Gamma_S$ of a set $S \subseteq \mathbb{Z}_n$ is defined to have the vertex set $\mathbb{Z}_n$ and an edge between two distinct vertices $x, y \in \mathbb{Z}_n$ if $x + y \in S$. We are interested in properties of $\Gamma_S$ when $S$ is a random subset. To this end, for $p \in (0,1)$, we denote with $\Gamma_p$ the Cayley sum graph $\Gamma_S$ where $S \subseteq \mathbb{Z}_n$ is formed by taking each element in $\mathbb{Z}_n$ with probability $p$, independently of all other elements. We call such $S$ a \emph{$p$-random} subset. Starting with Green \cite{gree05clique}, the question of which parameters of $\Gamma_p$ match those of the binomial random graph $G(n,p)$ has attracted significant attention.

Given a graph $G$, we let $\alpha(G)$ denote the size of a largest independent set in $G$ -- a quantity commonly referred to as the \emph{independence number}. Green \cite{gree05clique} showed that the independence number $\alpha(\Gamma_{1/2})$ is at most $160 \log n$ with high probability. This was further strengthened by Green and Morris \cite{green16counting} to $(2 + o(1)) \log(n)$, which matches the bound on the independence number of $G(n,1/2)$. Campos, Dahia, and Marciano \cite{campos24cayley} have recently showed that $\alpha(\Gamma_p) \approx \alpha(G(n,p))$ for $p > (\log n)^{-1/80}$. Here we further extend the range of $p$.

\begin{theorem} \label{thm:main}
Let $n$ be a prime number and let $p = p(n)$ satisfy $p > (\log n)^{-1/3 + o(1)}$. Then 
$$
    \alpha(\Gamma_p) = (2 + o(1)) \log_{1/(1-p)}(n)
$$
with high probability as $n \rightarrow \infty$.
\end{theorem}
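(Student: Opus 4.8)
The plan is to establish, for every fixed $\eps>0$ and with high probability, the two bounds $\alpha(\Gamma_p)\le(2+\eps)\log_{1/(1-p)}(n)$ and $\alpha(\Gamma_p)\ge(2-\eps)\log_{1/(1-p)}(n)$ separately. For $I\subseteq\mathbb{Z}_n$ write $P(I)=\{x+y:x,y\in I,\ x\ne y\}$ for its set of restricted pairwise sums; then $I$ is independent in $\Gamma_S$ precisely when $P(I)\cap S=\emptyset$, so $\Pr[I\text{ is independent in }\Gamma_p]=(1-p)^{|P(I)|}$. Since $n$ is prime and every $I$ with $|I|=O(p^{-1}\log n)$ satisfies $|I|=o(\sqrt n)$ in our range, the Dias da Silva--Hamidoune (Erd\H{o}s--Heilbronn) theorem gives $|P(I)|\ge 2|I|-3$, while trivially $|P(I)|\le\binom{|I|}{2}$. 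Write $k^{\pm}=\lceil(2\pm\eps)\log_{1/(1-p)}(n)\rceil$, so that $(1-p)^{k^{\pm}}=n^{-(2\pm\eps)+o(1)}$ and $k^{\pm}\asymp p^{-1}\log n=(\log n)^{O(1)}=o(\sqrt n)$ throughout the range $p>(\log n)^{-1/3+o(1)}$.

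\emph{Lower bound.} I would apply the second moment method to $X$, the number of independent sets of size $k^{-}$. The first moment is $\mbE[X]\ge\binom{n}{k^{-}}(1-p)^{\binom{k^{-}}{2}}$, and a short calculation using $k^{-}\asymp p^{-1}\log n$ shows the exponent of $n$ here is $(\tfrac{\eps}{2}+o(1))k^{-}\to\infty$. For the variance one expands $\mbE[X^{2}]=\sum_{I,J}(1-p)^{|P(I)\cup P(J)|}$ and organises ordered pairs $(I,J)$ by the size of $|P(I)\cap P(J)|$: pairs with small overlap contribute $(1+o(1))\mbE[X]^{2}$, while pairs with large overlap are exactly those in which $I$ and $J$ share additive structure, and these are counted by a ``sets with small sumset'' estimate of exactly the type furnished by the key lemma of \cite{campos24cayley}. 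I expect this direction to go through essentially as in \cite{campos24cayley} throughout the present range of $p$. Chebyshev's inequality then gives $\Pr[X=0]\to0$, hence $\alpha(\Gamma_p)\ge k^{-}$ with high probability.

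\emph{Upper bound (the new content).} It suffices to show $\mbE[\text{number of independent }k^{+}\text{-sets}]=\sum_{|I|=k^{+}}(1-p)^{|P(I)|}=o(1)$, and I would split the $k^{+}$-sets according to the scale of $|P(I)|$. If $|P(I)|\ge(1-\delta)\binom{k^{+}}{2}$ for a small constant $\delta=\delta(\eps)$, then the crude count $\binom{n}{k^{+}}\le n^{k^{+}}$ together with $(1-p)^{(1-\delta)\binom{k^{+}}{2}}=n^{-(1-\delta)(1+\eps/2)(k^{+}-1)+o(k^{+})}$ gives a contribution $o(1)$ as soon as $(1-\delta)(1+\eps/2)>1$, i.e. for $\delta$ small in terms of $\eps$. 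For the additively structured sets, with $|P(I)|<(1-\delta)\binom{k^{+}}{2}$, the crude count is far too lossy; instead I would partition this range of $|P(I)|$ into dyadic scales $m\in[2k^{+},(1-\delta)\binom{k^{+}}{2}]$ and, on each scale, bound the number of $k^{+}$-sets with $|P(I)|\le m$ using the key lemma of \cite{campos24cayley} and multiply by $(1-p)^{m}$. The refinement is to apply that lemma on \emph{every} scale at once and only as strongly as the saving $(1-p)^{m}$ requires --- whereas \cite{campos24cayley} split the $k^{+}$-sets into just two classes and applied it with room to spare --- so that the scale-by-scale inequalities already close when $k^{+}$ is as large as $c\,p^{-1}\log n$, i.e. for $p$ as small as $(\log n)^{-1/3+o(1)}$.

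The main obstacle I anticipate is verifying exactly these scale-by-scale inequalities: on each dyadic scale $m=\mu k^{+}$ one needs the count of $k^{+}$-sets with $|P(I)|\le m$ coming from \cite{campos24cayley} to be smaller than $(1-p)^{-m}$, which is roughly $n^{(2+\eps)\mu}$. At the bottom scale $\mu=O(1)$ this is comfortable --- such sets are essentially dense subsets of short arithmetic progressions, of which there are only $n^{2+o(1)}$ --- but the intermediate scales are tight, and tracking how the counting bound degrades as $\mu$ grows against the merely linear-in-$m$ gain from $(1-p)^{m}$ is what fixes the threshold exponent (the $1/3$, with its $o(1)$ correction). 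A secondary technical matter is checking that $|P(I)|\ge 2|I|-3$ and $k^{\pm}=o(\sqrt n)$ really do suffice both to anchor the dyadic sum at its lowest scale and to keep every restricted sumset below the Cauchy--Davenport saturation value $n$.
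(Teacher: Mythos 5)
There is a genuine gap, and it sits exactly where you park it as ``the main obstacle I anticipate''. Your upper-bound reduction --- ``it suffices to show $\mbE[\text{number of independent }k^{+}\text{-sets}]=\sum_{|I|=k^{+}}(1-p)^{|P(I)|}=o(1)$'' --- is false in the regime of interest: that expectation diverges once $p$ is below a fixed constant, so no dyadic reorganisation of the same sum can make it small. Concretely, take $A$ to be any $k^{+}$ of the $2k^{+}$ elements of an arithmetic progression of length $2k^{+}$; then $|P(A)|\le 4k^{+}$, so $\Pr[P(A)\subseteq S^{c}]\ge(1-p)^{4k^{+}}=n^{-4(2+\eps)+o(1)}$, while the number of such sets is at least $n^{2}\binom{2k^{+}}{k^{+}}\ge n^{\Theta(1/p)}$ because $k^{+}\asymp p^{-1}\log n$. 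Hence the contribution of the single bottom scale $m\asymp k^{+}$ already blows up for $p=o(1)$ (indeed for $p$ below roughly $0.2$), and the scheme ``count the $k^{+}$-sets with $|P(I)|\le m$, multiply by $(1-p)^{m}$'' cannot close at any scale where structured sets proliferate. This is precisely why the first-moment-over-whole-sets argument was abandoned already for $p$ constant: many structured sets $A$ must be made to share one probabilistic event, rather than each paying its own factor $(1-p)^{|P(A)|}$.

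The paper's actual mechanism, which your outline never supplies, is to union bound not over the sets $A$ but over small \emph{fingerprints}: for $A$ with doubling below $k^{1/4}$ one extracts (Lemma \ref{lemma:fingerprint}, replacing the key lemma of \cite{campos24cayley} rather than reusing it) a subset $F\subseteq X\subseteq A$ with $|F|=O(\sqrt{dk})$ and $|F\hat{+}F|\ge(1-o(1))(d+1)k/2$, where $d=\dim_f(X)$, and one places $X$ (hence $F$) in a $d$-GAP of size $e^{k^{1/4+o(1)}}$ via a Freiman--Ruzsa theorem over $\mathbb{Z}_n$ (Lemma \ref{lemma:freiman_cyclic}). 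The union bound then costs only $n^{d+1}\exp\bigl(k^{3/4+o(1)}\sqrt{d}\bigr)$ per dimension $d$, against a gain $(1-p)^{(1-o(1))(d+1)k/2}\approx n^{-(1-o(1))(d+1)}e^{-\Omega(\alpha pdk)}$, and the requirement $pdk\gg k^{3/4+o(1)}\sqrt{d}$ is what produces the exponent $1/3$; the classes with doubling in $[k^{1/4},\delta k/10]$ and above are handled by the older counting results (so the decomposition is into three classes by doubling constant, not two, both here and in \cite{campos24cayley}). Without stating and proving a fingerprint-type lemma of this strength and a GAP-container bound of size $e^{K^{1+o(1)}}|A|$, your ``scale-by-scale inequalities'' are not merely unverified --- in the form you set them up (count of sets times $(1-p)^{m}$) they are unprovable, since the quantity they would bound is not $o(1)$. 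Your lower-bound sketch is not the new content and is plausibly fine as a deferral to \cite{campos24cayley}, but the upper bound as proposed does not reach any $p=o(1)$, let alone $p>(\log n)^{-1/3+o(1)}$.
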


The overall proof is largely the same as the one of Campos, Dahia, and Marciano \cite{campos24cayley}, with \cite[Theorem 2.1]{campos24cayley} being replaced by Lemma \ref{lemma:fingerprint} (the \emph{Fingerprint lemma}), and \cite[Corollary 8.4]{campos24cayley} being replaced by Lemma \ref{lemma:freiman_cyclic} (the Freiman-Ruzsa theorem for cyclic groups of prime order). Of the two, Lemma \ref{lemma:fingerprint} is more interesting from the point of view of novelty. The proof of Lemma \ref{lemma:freiman_cyclic} follows parts the exposition of Green \cite{green_notes} based on the work of Chang \cite{chang02polynomial}.



\section{Preliminaries}

Throughout the paper we tacitly avoid explicit use of floor and ceiling. 

\paragraph{Generalised arithmetic progression.} For simplicity, we only consider shifted centred progressions. Namely, a \emph{$d$-dimensional generalised arithmetic progression}, or \emph{$d$-GAP} for short, in an abelian group $G$ is a set $P$ of the form
$$
    \left\{ v_0 + \sum_{i = 1}^d n_i v_i \mid -N_i \le n_i \le N_i \right\},
$$
for some $v_0, v_1, \ldots, v_d \in G$ and $N_1, \ldots, N_d \in \mathbb{N}$. The size of $P$ is defined as $\mathrm{size}(P) = \prod_{i = 1}^r (2N_i + 1)$. Note that $\textrm{size}(P)$ is an upper bound on the cardinality of $P$. If $v_0 = 0$, we say that $P$ is centred.

\paragraph{Freiman isomorphism.} Given additive sets $X$ and $Y$, a bijective mapping $\phi: X \mapsto Y$ is called a \emph{Freiman isomorphism} if $a + b = a' + b'$ if and only if $\phi(a) + \phi(b) = \phi(a') + \phi(b')$, whenever $a, b, a', b' \in X$. 

The \emph{Freiman dimension} of a set $A$, denoted by $\dim_f(A)$, is defined as the largest $d \in \mathbb{N}$ for which there is a subset of $\mathbb{Z}^d$ of full rank which is Freiman isomorphic to $A$. As observed in \cite{campos24cayley}, the following is a simple corollary of Freiman's lemma \cite{freiman99lemma} on sumsets of sets with full rank in $\mathbb{R}^d$.

\begin{observation} \label{obs:upper_bound}
    For $X \subseteq \mathbb{Z}_n$ with $|X+X| \le K|X|$ we have $\dim_f(X) < 2K$.
\end{observation}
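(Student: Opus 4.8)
The plan is to unwind the definition of the Freiman dimension and apply Freiman's lemma to a Euclidean model of $X$. Set $d = \dim_f(X)$; we may assume $X \ne \emptyset$, so that $|X+X| \ge 1$ and hence $K > 0$. By definition of $\dim_f$ there is a set $B \subseteq \mathbb{Z}^d \subseteq \mathbb{R}^d$ of full rank (i.e.\ affinely spanning $\mathbb{R}^d$) that is Freiman isomorphic to $X$. The first, routine, point is that a Freiman isomorphism $\phi \colon X \to B$ induces a well-defined injection — in fact a bijection — $X + X \to B + B$ via $a + b \mapsto \phi(a) + \phi(b)$: well-definedness and injectivity are precisely the two implications in the defining biconditional of a Freiman isomorphism. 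Hence $|B + B| = |X + X|$, and of course $|B| = |X|$.

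Now I would invoke Freiman's lemma \cite{freiman99lemma}: any finite $B \subseteq \mathbb{R}^d$ of full affine rank satisfies $|B + B| \ge (d+1)|B| - \binom{d+1}{2}$. Combining this with the hypothesis $|X+X| \le K|X|$ and the identities from the previous paragraph gives
$$
    (d+1)|X| - \tbinom{d+1}{2} \;\le\; |B+B| \;=\; |X+X| \;\le\; K|X|,
$$
that is, $(d + 1 - K)\,|X| \le \binom{d+1}{2} = \tfrac12 d(d+1)$.

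To conclude, split on the sign of $d+1-K$. If $d + 1 - K \le 0$ then $d \le K - 1 < 2K$ and we are done. Otherwise $d + 1 - K > 0$; since a full-rank (affinely spanning) subset of $\mathbb{Z}^d$ has at least $d+1$ elements, we have $|X| = |B| \ge d+1$, and substituting this into the last inequality and cancelling the positive factor $d+1$ yields $d + 1 - K \le \tfrac12 d$, i.e.\ $d \le 2K - 2 < 2K$. Either way $\dim_f(X) < 2K$.

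There is no genuine obstacle here: the substance is entirely contained in Freiman's lemma, which we use as a black box. The only things needing (minor) care are that a Freiman isomorphism transports the doubling set unchanged — so that a statement about sumsets in $\mathbb{R}^d$ may legitimately be pulled back to $X \subseteq \mathbb{Z}_n$ — and the elementary bound $|B| \ge d+1$ for an affinely spanning subset of $\mathbb{Z}^d$, which is what lets us eliminate the unknown quantity $|X|$ in favour of the dimension.
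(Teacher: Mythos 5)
Your proof is correct and is exactly the intended argument: the paper states Observation~\ref{obs:upper_bound} as a simple corollary of Freiman's lemma (following \cite{campos24cayley}), and your derivation --- transporting the doubling bound through the Freiman isomorphism, applying $|B+B| \ge (d+1)|B| - \binom{d+1}{2}$, and eliminating $|X|$ via $|B| \ge d+1$ --- is precisely that corollary spelled out. No issues.
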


\section{The Fingerprint lemma}

The following lemma is the key new ingredient in the proof of Theorem \ref{thm:main}. 

\begin{lemma} \label{lemma:fingerprint}
    For every $\alpha > 0$ there exists $\xi, C > 0$ such that the following holds. Suppose $A \subseteq \mathbb{Z}_n$ is such that $|A + A| \le K|A|$ for some $K \le \xi |A| / \log^2(|A|)$. Then there exist $F \subseteq X \subseteq A$ such that     
    $$
        |X| \ge (1 - \alpha)|A|, \; |F| = C \sqrt{\dim_f(X) |A|} \quad \text{and} \quad
        |F + F| \ge (1 - \alpha) \frac{(\dim_f(X) + 1)|A|}{2}.
    $$    
\end{lemma}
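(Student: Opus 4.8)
The plan is to move to a Freiman-isomorphic integer model and to build $F$ by a construction mirroring the proof of Freiman's lemma. Since every conclusion of Lemma~\ref{lemma:fingerprint} is expressed through $\dim_f$ and sizes of sumsets, all of which are preserved by Freiman isomorphisms, it suffices — after $X$ has been fixed — to work inside a full-rank model $\widetilde{X}\subseteq\mathbb{Z}^{d}$ of $X$, with $d:=\dim_f(X)$, to find $\widetilde{F}\subseteq\widetilde{X}$ with $|\widetilde{F}|\le C\sqrt{d|A|}$ and $|\widetilde{F}+\widetilde{F}|\ge(1-\alpha)(d+1)|A|/2$, transport it back, and finally pad it with arbitrary further elements of $X$ (which only enlarges the sumset) to reach size exactly $C\sqrt{d|A|}$. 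Two remarks set the scale. First, by Observation~\ref{obs:upper_bound} the hypothesis $K\le\xi|A|/\log^{2}(|A|)$ forces $d<2K\le 2\xi|A|/\log^{2}(|A|)=o(|A|)$, so $C\sqrt{d|A|}\le|A|$ for $\xi$ small: there is room for $\widetilde{F}$ inside $\widetilde{X}$ and the $\binom{d+1}{2}$-type error terms below are negligible. Second, Freiman's lemma \cite{freiman99lemma} gives $|\widetilde{X}+\widetilde{X}|\ge(d+1)|\widetilde{X}|-\binom{d+1}{2}=(1-o(1))(d+1)|A|$, so the target for $|\widetilde{F}+\widetilde{F}|$ is essentially half of what $\widetilde{X}$ is already forced to contain, and must be realised with only $O(\sqrt{d|A|})$ of its points.

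The argument then splits according to the additive energy $E(A)=\sum_{s}|\{(a,b)\in A^{2}:a+b=s\}|^{2}$. In the \emph{dispersed} case $E(A)\le|A|^{3}/(C^{2}\dim_f(A))$ I would simply take $X=A$ and let $\widetilde{F}\subseteq\widetilde{X}$ be a uniformly random subset of size $\approx C\sqrt{d|A|}$. Writing $N=|A|$, $p$ for the inclusion probability (so $p^{2}=C^{2}d/N$), and $t_{s}$ for the number of unordered pairs of distinct elements of $\widetilde{X}$ summing to $s$, the crucial point is that two distinct such pairs with the same sum $s$ are disjoint, so the events that a given such pair lies inside $\widetilde{F}$ behave nicely (they are independent in the random-inclusion model and negatively associated for $\widetilde{F}$ of fixed size); hence $\mathbb{E}\,|\widetilde{F}+\widetilde{F}|\ge\sum_{t_{s}\ge 1}\bigl(1-e^{-p^{2}t_{s}}\bigr)\ge\tfrac12 p^{2}\tbinom{N}{2}-\tfrac18 p^{4}E(A)$, using $1-e^{-x}\ge\tfrac12\min(1,x)$ together with $\sum_{s:\,t_{s}>1/p^{2}}t_{s}\le p^{2}\sum_{s}t_{s}^{2}\le p^{2}E(A)/4$. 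With $p^{2}=C^{2}d/N$ and the energy hypothesis this is at least $\tfrac18 C^{2}dN$, which exceeds $(1-\alpha)(d+1)|A|/2$ once $C$ is a large enough absolute constant; the negative-association remark then gives the same bound for an $\widetilde{F}$ of exactly the prescribed size, finishing this case.

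The \emph{structured} case $E(A)>|A|^{3}/(C^{2}\dim_f(A))$ is where the work lies and where I expect the main obstacle. Here the random bound above is too weak, but $A$ has substantial additive structure, and the construction I would aim for is the higher-dimensional form of the elementary fact that the $\approx 2\sqrt{\ell}$ numbers $\{0,\dots,\lceil\sqrt{\ell}\,\rceil\}\cup\lceil\sqrt{\ell}\,\rceil\cdot\{0,\dots,\lceil\sqrt{\ell}\,\rceil\}$ generate $[\ell]$ under addition. Concretely: after deleting from $A$ at most $\alpha|A|/2$ points (discarding, repeatedly, the few points lying off a hyperplane that contains almost all of the current model — each such step lowers the working dimension, so there are fewer than $\dim_f(A)$ of them) one should be able to reduce to the case where the model $\widetilde{X}$ of the resulting $X$ contains a proper generalised arithmetic progression $P$ with $\mathrm{size}(P)=(1-o(1))|A|$; splitting the defining directions of $P$ into two halves of equal volume writes $P=Q_{1}+Q_{2}$ with $Q_{1},Q_{2}\subseteq P$ and $|Q_{1}|,|Q_{2}|=(1+o(1))\sqrt{|P|}$, and taking $\widetilde{F}$ to be the union of $\approx\sqrt{d}$ such pairs, one for each of $\approx\sqrt{d}$ different balanced splits, arranged so that the $\approx d$ cross-sumsets $Q_{1}^{(t)}+Q_{2}^{(s)}$ with $t\neq s$ are pairwise disjoint and free of internal collisions, gives $|\widetilde{F}|=O(\sqrt{d|A|})$ and $|\widetilde{F}+\widetilde{F}|\ge(1-o(1))d|A|/2$. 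The two things that are genuinely delicate here are (i) choosing the $\approx\sqrt{d}$ splits so that all the cross-sumsets are simultaneously disjoint and collision-free while their total is not merely $\Omega(d|A|)$ but at least $(1-\alpha)(d+1)|A|/2$ — this is where the factor $\tfrac12$ and the $(1-\alpha)$ slack are consumed — and (ii) reducing a general structured $A$ to this progression case with density $1-\alpha$, rather than the $1/\mathrm{poly}(d)$ density that a black-box Balog-Szemeredi-Gowers argument would give; this last reduction is presumably where the full strength of $K\le\xi|A|/\log^{2}(|A|)$ is spent, ensuring the extracted progression has dimension $o(|A|)$ and almost-full density.
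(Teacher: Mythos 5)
There is a genuine gap, and it sits exactly where you flag it: the ``structured'' case is not proved, and the reduction you propose for it is false as stated. You ask to delete at most $\alpha|A|/2$ points and thereby reach a set whose full-rank model \emph{contains} a proper GAP of size $(1-o(1))|A|$. No Freiman-type statement gives containment of a near-full-density progression inside $A$; Freiman/Ruzsa-type theorems only place $A$ \emph{inside} a GAP of size $\exp(O_K(1))|A|$, and the discrepancy cannot be repaired by removing an $\alpha$-fraction of points. Concretely, let $A$ be a union of $d$ arithmetic progressions of length $|A|/d$ with generic common differences: then $|A+A|=\Theta(d|A|)$ (so $K=\Theta(d)$), the additive energy is $\Theta(|A|^3/d)$, so this set lands in your structured case, yet every proper GAP contained in it (even after deleting $\alpha|A|/2$ elements) has size $O(|A|/d)$, not $(1-o(1))|A|$. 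Note also that by Cauchy--Schwarz $E(A)\ge |A|^4/|A+A|\ge |A|^3/K$, so your dispersed case can only occur when $\dim_f(A)\le K/C^2$; the structured case is therefore the main regime, not an exceptional one, and your points (i) and (ii) -- getting the constant $(1-\alpha)\frac{(d+1)}{2}$ rather than $\Omega(d)$, and upgrading a BSG-type positive-density conclusion to density $1-\alpha$ -- are precisely the content of the lemma, left unproved. A further structural issue: the lemma's conclusion is stated in terms of $\dim_f(X)$ for the large subset $X$ you output, and nothing in your sketch controls how the dimension behaves under the deletions you perform (your hyperplane-peeling step changes the dimension but you never re-anchor the target to the final set in a way that survives the construction).

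The paper avoids all of this by not attempting a structure-versus-randomness dichotomy at all. It first passes to an $(\eps,\alpha)$-Freiman-robust subset $X$ (Lemma \ref{lemma:large_robust}), so that the Freiman dimension cannot drop under small deletions; it then applies the quoted fingerprint theorems (Theorems \ref{thm:T_small_d} and \ref{thm:T_large_d}, combined in Corollary \ref{cor:small_fingerprint}) $\ell=\sqrt{|A|/d}$ times to the successively shrinking sets $X_i$, producing disjoint fingerprints $T_1,\dots,T_\ell$ of total size $O_\alpha(\sqrt{d|A|})$, each with $|T_i+X_i|\ge(1-2\alpha)\frac{(d+1)|X_i|}{2}$; finally a greedy phase picks $O_\alpha(\sqrt{d|A|})$ further elements $x\in X$ so that the translates $F_T+x$ cover $(1-5\alpha)\frac{(d+1)|A|}{2}$ sums, the disjointness of the $T_j$ guaranteeing that each greedy step gains at least $\frac{\alpha}{2}\sqrt{d|A|}$ new elements. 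If you want a correct proof along efficient lines, you should use those two theorems (which carry the hard content about $|T+X|\ge(1-o(1))\frac{(d+1)|X|}{2}$ with $|T|$ small) rather than trying to rebuild that estimate from an energy dichotomy; your dispersed-case random-selection computation is essentially sound but, as noted, it only covers a thin slice of the parameter range.
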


The key notion in the proof of Lemma \ref{lemma:fingerprint} is that of a robust Freiman dimension, introduced in \cite{campos24cayley}. Here we state it slightly modified:  A set $X$ is \emph{$(\eps, \beta)$-Freiman-robust} if $\textrm{dim}_f(X') \ge (1 - \beta)\dim_f(X)$ for every $X' \subseteq X$ of size $|X'| \ge (1 - \eps)|X|$. The following is analogous to \cite[Proposition 8.3]{campos24cayley}.

\begin{lemma} \label{lemma:large_robust}
    Let $A \subseteq \mathbb{Z}_n$ with $|A + A| \le K|A|$ and let $\eps < 1/L$, where $L = \log_{1/(1-\beta)}(2K)$ for some $\beta > 0$. Then there exists $X \subseteq A$ such that $|X| > (1 - \eps L)|A|$ and $X$ is $(\eps, \beta)$-Freiman-robust.
\end{lemma}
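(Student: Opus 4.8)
I would prove the lemma by a straightforward iterative refinement, using the Freiman dimension as a monovariant. Set $X_0 = A$. Given $X_i$, if it is already $(\eps,\beta)$-Freiman-robust, stop and output $X := X_i$; otherwise, the failure of robustness provides a subset $X_{i+1} \subseteq X_i$ with $|X_{i+1}| \ge (1-\eps)|X_i|$ and $\dim_f(X_{i+1}) < (1-\beta)\dim_f(X_i)$, and we repeat. It then suffices to show that the process halts within $L$ steps and that the set it outputs is large.

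\textbf{Bounding the number of iterations.} From $|A+A| \le K|A|$ and Observation~\ref{obs:upper_bound} we get $\dim_f(X_0) = \dim_f(A) < 2K$. Each refinement step multiplies the Freiman dimension by a factor strictly less than $1-\beta$, so after $j$ steps $\dim_f(X_j) < (1-\beta)^j \dim_f(X_0) < (1-\beta)^j \cdot 2K$. Choosing $j = L = \log_{1/(1-\beta)}(2K)$ makes $(1-\beta)^L \cdot 2K = 1$, so $\dim_f(X_L) < 1$; since the Freiman dimension is a nonnegative integer this forces $\dim_f(X_L) = 0$. A set of Freiman dimension $0$ is a single point, and such a set is vacuously $(\eps,\beta)$-Freiman-robust, so the process cannot reach step $L$ without having already stopped. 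Hence it terminates at some step $i \le L$, and we output $X = X_i$.

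\textbf{Size of the output, and the main difficulty.} Each step costs a factor of at most $1-\eps$, so $|X| = |X_i| \ge (1-\eps)^i|A| \ge (1-\eps)^L|A|$ (using $i \le L$ and $0 < 1-\eps < 1$). Since $\eps < 1/L$ we have $\eps L < 1$, and Bernoulli's inequality gives $(1-\eps)^L > 1 - \eps L$ (strict, as $L > 1$, which follows since $|A+A| \ge |A|$ forces $K \ge 1$), whence $|X| > (1-\eps L)|A|$, as required. There is essentially no obstacle here: this is a textbook "remove the obstruction and repeat" argument, and the only content is spotting that $\dim_f$ is the right potential together with invoking Observation~\ref{obs:upper_bound} to cap its starting value at $2K$. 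The one point deserving a sentence of care is termination — ruling out an infinite descent — which is handled by the two remarks that $\dim_f$ is integer-valued and that dimension-$0$ sets are trivially robust.
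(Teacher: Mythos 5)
Your proposal is correct and is essentially the paper's own argument: the same iterative refinement with $\dim_f$ as the potential, capped at $2K$ by Observation~\ref{obs:upper_bound}, terminating within $L = \log_{1/(1-\beta)}(2K)$ rounds, and the size bound via $(1-\eps)^t \ge 1 - t\eps$. The only cosmetic difference is in how termination is finished off (the paper simply notes $\dim_f(X) \ge 1$ for non-empty $X$, while you argue via dimension-$0$ sets being robust), which changes nothing of substance.
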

\begin{proof}
    Set $X = A$ and repeat the following: If there exists $X' \subseteq X$ of size $|X'| \ge (1 - \eps)|X|$ such that $\dim_f(X') < (1 - \beta)\dim_f(X)$, set $X := X'$; otherwise terminate. As $\dim_f(A) < 2K$ (Observation \ref{obs:upper_bound}) and trivially $\dim_f(X) \ge 1$ for every non-empty $X$, the process terminates after $t \le L := \log_{1/(1-\beta)} (2K)$ rounds. The size of the resulting set $X$ is at least 
    $$
        |X| \ge (1 - \eps)^t|A| \ge (1 - t \eps)|A|.
    $$
    By the definition of the process, $X$ is $(\eps, \beta)$-Freiman-robust.
\end{proof}

The proof of Lemma \ref{lemma:fingerprint} builds on iterated application of the following two results. Similarly as in the proof of \cite[Theorem 2.2]{campos24cayley}, we employ the first one when $d = O(1)$ and the second when $d$ is large.

\begin{theorem}[{\cite[Theorem 1.2]{jing2023kemperman}}] \label{thm:T_small_d}
    Given $d \in \mathbb{N}$, there exists some constant $C = C(d) > 0$ such that for every finite, non-empty set $X \subseteq \mathbb{R}^d$ of full rank, there exist $T \subseteq X$ of size $|T| \le C$ such that
    $$
        |T + X| \ge (d+1)|X| - 5(d+1)^3.
    $$
\end{theorem}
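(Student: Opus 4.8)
The plan is to prove this by induction on $d$, replaying the classical proof of Freiman's lemma (\cite{freiman99lemma}) while recording that only a bounded number of summands is ever used.

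\emph{Base case $d=1$} (so $|X|\ge 2$): take $T=\{\min X,\max X\}$ and set $a=\min X$, $b=\max X$. If $a+x=b+x'$ with $x,x'\in X$, then $x-x'=b-a$, which forces $x=b$ and $x'=a$; hence $(a+X)\cap(b+X)=\{a+b\}$ and $|T+X|=2|X|-1\ge 2|X|-5\cdot 2^{3}$, so $C(1)=2$ works.

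\emph{Inductive step}: fix a suitable direction $v\neq 0$ and let $\pi\colon\mathbb R^{d}\to\mathbb R^{d}/\langle v\rangle\cong\mathbb R^{d-1}$ be the quotient map, whose fibres are the translates of $\langle v\rangle$; then $Y:=\pi(X)$ has full rank $d-1$. Applying the inductive hypothesis to $Y$ yields $T_Y\subseteq Y$ with $|T_Y|\le C(d-1)$ and $|T_Y+Y|\ge d|Y|-5d^{3}$. The output $T$ is a lift $\widehat{T_Y}\subseteq X$ of $T_Y$ (one preimage per element) together with a number of additional points of $X$ that are extremal in the direction $v$ --- concretely, the endpoints of a few of the ``longest'' fibres, and/or the two $v$-extreme points of $X$. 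Since $\pi(T+X)\supseteq T_Y+Y$, we can write $|T+X|=\sum_{c\in T_Y+Y}|(T+X)_{c}|$, where $(T+X)_{c}$ is the fibre of $T+X$ over $c$, and it suffices to bound these fibres from below. For $c=t+y$ with $t\in T_Y$ and $y\in Y$, the fibre $(T+X)_{c}$ contains the translate $\hat t+X_{y}$ of the corresponding fibre $X_{y}:=\pi^{-1}(y)\cap X$, and so has size at least $r_{y}:=|X_{y}|$; this alone sums to only about $d|X|$. The purpose of the added extremal points is to make $(T+X)_{c}$, for enough $c$, contain essentially a sum $X_{y}+X_{y'}$ of two fibres of $X$, of size $\ge r_{y}+r_{y'}-1$, which supplies the missing $\approx|X|$ and raises the total to $(d+1)|X|$. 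This last gain is precisely where one uses that $X$ has rank $d$ and not merely $d-1$.

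The main obstacle, and the reason for the explicit cubic error term, is to perform two bookkeeping tasks simultaneously: to keep $|T|$ bounded in terms of $d$ alone --- the key point to be verified is that each inductive step adds only boundedly many new points, so that $C(d)=C(d-1)+O(1)$ --- while ensuring the fibre count loses only $O(d^{3})$ rather than a power of $|X|$. The latter is a genuine danger for ``thin'' near-extremal sets such as $[0,m]\times\{0,1\}$ or $[0,m]^{2}$, where a careless three-point choice realizes only $(d+1)|X|-\Theta(|X|^{c})$ for some $c>0$. So the lift $\widehat{T_Y}$ and the augmenting points must be coordinated: for instance, the lift of each $t\in T_Y$ should be an extreme element of the fibre it lies in, so that the added fibre endpoints make the translates of $X$ that constitute $T+X$ genuinely spread out along $v$; and one must check that $5(d+1)^{3}$ absorbs the accumulated lower-order terms, which for the extremal GAP-like configurations are themselves only polynomial in $d$. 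Carrying out this spreading-out step cleanly for an arbitrary full-rank $X$, uniformly in $d$, is the heart of the argument.
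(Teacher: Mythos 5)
This theorem is not proved in the paper at all: it is quoted as an external result from \cite{jing2023kemperman}, so the only question is whether your argument stands on its own, and at present it does not. The base case $d=1$ is fine, but the inductive step is a plan rather than a proof. You never specify which extremal points are adjoined to the lift $\widehat{T_Y}$, never bound their number beyond the bare assertion that each step adds $O(1)$ points, and never prove the fibre estimate that is supposed to supply the extra $\approx |X|$ while losing only $O(d^{3})$ --- indeed you explicitly defer this step as ``the heart of the argument.'' Even the preliminary count $\sum_{c} |(T+X)_{c}| \gtrsim d|X|$ needs an argument you do not give: an element $c \in T_Y + Y$ may have several representations $c = t + y$, so one must choose representations (or sum over a suitable system of distinct representatives) so that the fibres $X_y$ charged to distinct $c$'s add up to $d|X|$ minus a controlled error, and one must then track how the inductive deficit $5d^{3}$ on $|T_Y+Y|$ propagates into the fibre sum so as to land at $5(d+1)^{3}$. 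None of this bookkeeping is carried out, and it is precisely where the ``thin'' examples you mention can destroy the bound.

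More fundamentally, the premise that one can ``replay the classical proof of Freiman's lemma while recording that only a bounded number of summands is ever used'' is doubtful: the standard proofs of Freiman's lemma (induction on $|X|$, peeling off vertices of the convex hull) use the translates $x + X$ for essentially every $x \in X$ through the recursion, so a bounded fingerprint $T$ does not fall out by bookkeeping. Obtaining $T$ with $|T| \le C(d)$ is exactly the new content of \cite{jing2023kemperman} (and, in the regime of large $d$, of Theorem \ref{thm:T_large_d} from \cite{campos24cayley}), whose proofs rest on genuinely different ideas. As written, your text identifies the right difficulties but does not resolve them, so the statement remains unproved; either carry out the inductive step in full (specifying the augmenting set, bounding $|T|$, and verifying the $5(d+1)^{3}$ error uniformly in $X$) or cite the result as the paper does.
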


\begin{theorem}[{\cite[Theorem 1.3]{campos24cayley}}] \label{thm:T_large_d}
    Let $d \in \mathbb{N}$ and $\gamma > d^{-1/3}$, for every finite set $X \subseteq \mathbb{R}^d$ of full rank there exists $T \subseteq X$ such that $|T| \le 4(d + 1)/\gamma$ and 
    $$
        |T + X| \ge (1 - 5\gamma) \frac{(d+1)|X|}{2}.
    $$
\end{theorem}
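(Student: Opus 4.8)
The plan is to construct $T$ greedily and to control the construction using Freiman's lemma, which (as invoked for Observation~\ref{obs:upper_bound}) gives $|X+X|\ge(d+1)|X|-\binom{d+1}{2}$ for $X$ of full rank, so that the sumset we aim to partially cover has size essentially $(d+1)|X|$. Starting from $T=\emptyset$, at each step add to $T$ an element $t\in X$ maximising the number of new points $|(t+X)\setminus(T+X)|$, and stop once $|T+X|$ reaches the target $\tau:=(1-5\gamma)\tfrac{(d+1)|X|}{2}$. Since $X+X=\bigcup_{t\in X}(t+X)$, at any stage some $t\in X$ contributes at least $\bigl(|X+X|-|T+X|\bigr)/|X|$ new points; so while $|T+X|<\tau$ the chosen $t$ adds at least
\[
  \frac{|X+X|-\tau}{|X|}\;\ge\;(d+1)\Bigl(\tfrac{1+5\gamma}{2}-\tfrac{d}{2|X|}\Bigr)\;\ge\;\tfrac{5\gamma(d+1)}{2}
\]
new points (using $|X|\ge d$). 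Hence the process terminates after at most $\tau\big/\bigl(\tfrac{5\gamma(d+1)}{2}\bigr)=\tfrac{(1-5\gamma)|X|}{5\gamma}$ steps, which is at most $4(d+1)/\gamma$ as soon as $|X|\le 20(d+1)$. This settles the theorem whenever $|X|$ is at most a fixed multiple of $d+1$.

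The remaining --- and genuinely harder --- case is $|X|$ large compared with $d$, where the estimate above is too weak (it only bounds $|T|$ by about $|X|/\gamma$). The point is that a set which is merely full rank, rather than genuinely spread out, can have all translates $t+X$ overlapping heavily, so the worst-case greedy gain cannot be improved without using the geometry of $X\subseteq\mathbb{R}^d$. Here I would argue by (essentially) induction on $d$, following the fibering proof of Freiman's lemma: take a generic hyperplane projection $\pi\colon\mathbb{R}^d\to\mathbb{R}^{d-1}$, so $Y:=\pi(X)$ has full rank in $\mathbb{R}^{d-1}$, with fibres $X_z=\pi^{-1}(z)\cap X$ and $\sum_z|X_z|=|X|$; apply the (appropriately formulated, possibly weighted) inductive statement to $Y$ to obtain a small set $S\subseteq Y$ whose sumset with $Y$ is $\ge(1-5\gamma)\tfrac{d|Y|}{2}$, lift $S$ to $\widehat S\subseteq X$ by choosing one point per fibre, and add $O(1/\gamma)$ further elements of $X$ chosen to maximise the ``vertical'' spread of individual fibres --- the one-dimensional device $T=\{\min,\max\}$ that doubles the length of a one-dimensional sumset. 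With $|T|\le 4d/\gamma+O(1/\gamma)=4(d+1)/\gamma$, the vertical elements are designed so that the fibres of $T+X$ roughly double in size relative to those of $X$, upgrading the inductive bound from $\tfrac{d|X|}{2}$ to the desired $\tfrac{(d+1)|X|}{2}$ (up to the $(1-5\gamma)$ loss).

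The step I expect to be the crux is making this fibre bookkeeping exact: one must control how the lifted set $\widehat S$ interacts with the fibre structure, ensure the additive errors do not compound over the $d$ recursion levels (so the final loss is $(1-5\gamma)$ rather than something like $(1-\gamma)^{\Theta(d)}$), and absorb the lower-order terms --- of Freiman type $\binom{d+1}{2}$, and, when the recursion bottoms out at bounded dimension, the cubic error $5(d+1)^3$ imported from Theorem~\ref{thm:T_small_d}. This is exactly where the hypothesis $\gamma>d^{-1/3}$ is spent: it is the strength needed to make $(d+1)^3$ negligible against a $\gamma$-fraction of $(d+1)|X|$ once the easy small-$|X|$ range has been removed, and to carry out the vertical-doubling step with only $O(1/\gamma)$ extra elements at a cost of merely an additive $O(\gamma)$ fraction. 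The geometric ingredients --- Freiman's lemma, the averaging/greedy estimate, and the one-dimensional doubling trick propagated through fibres --- are soft; it is the quantitative closure of the induction that does the real work.
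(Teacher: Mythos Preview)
This theorem is not proved in the paper: it is quoted verbatim from \cite{campos24cayley} and used as a black box in Corollary~\ref{cor:small_fingerprint}. So there is no ``paper's proof'' to compare against; I can only assess your proposal on its own merits.

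Your greedy argument for the regime $|X|\le 20(d+1)$ is correct (and in fact for $|X|=O(d)$ one could simply take $T=X$, since $|X|\le 4(d+1)/\gamma$ once $\gamma\le 1/5$, and Freiman's lemma already gives $|X+X|\ge (1-5\gamma)\tfrac{(d+1)|X|}{2}$). This is the trivial range.

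The substantive content of the theorem is the case $|X|\gg d$, and here your sketch is not a proof but a wish list. Two concrete problems. First, a \emph{generic} projection $\pi$ of a finite set is injective, so every fibre $X_z$ is a singleton and there is no vertical structure to ``double''; to get nontrivial fibres you must project along a direction in $X-X$, but then $|Y|<|X|$ and the inductive bound $(1-5\gamma)\tfrac{d|Y|}{2}$ is weaker than what you need. You have not explained how the shortfall $|X|-|Y|$ is recovered by the $O(1/\gamma)$ vertical elements, and in general it cannot be: a single fibre of size $|X|-|Y|+1$ contributes only $O(|X|-|Y|)$ extra sums with two vertical points, not the $\Theta((|X|-|Y|)\cdot d)$ you would need. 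Second, you yourself flag the error-compounding issue --- a multiplicative loss per level over $d$ levels gives $(1-c\gamma)^d$, not $1-5\gamma$ --- and then do not resolve it; invoking Theorem~\ref{thm:T_small_d} at the bottom does not help, since its constant $C(d)$ is unspecified and the cubic error $5(d+1)^3$ is only absorbed by $\gamma>d^{-1/3}$ at the \emph{top} dimension, not along the way.

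In short: the easy range is fine, but the plan for large $|X|$ has a genuine gap at exactly the point you identify as the crux, and the ingredients you list are not enough to close it. The actual argument in \cite{campos24cayley} is different and does real work that is not captured by ``Freiman's lemma plus greedy plus fibering''.
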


For convenience, we combine them in the following corollary.

\begin{cor} \label{cor:small_fingerprint}
    Given $\alpha > 0$ and sufficiently large $X \subseteq \mathbb{Z}_n$, there exists $C = C(\alpha)$ and a subset $T \subseteq X$ of size $|T| \le C \dim_f(X)$ such that
    $$
        |T + X| \ge (1 - \alpha) \frac{(\dim_f(X) + 1)|X|}{2}.
    $$
\end{cor}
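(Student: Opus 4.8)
The goal is to deduce Corollary~\ref{cor:small_fingerprint} from Theorems~\ref{thm:T_small_d} and~\ref{thm:T_large_d} applied to a full-rank model of $X$ inside $\mathbb{Z}^d$. The plan is as follows. Write $d = \dim_f(X)$. By definition of the Freiman dimension, there is a set $\tilde{X} \subseteq \mathbb{Z}^d$ of full rank which is Freiman isomorphic to $X$; fix the isomorphism $\phi : X \to \tilde{X}$. Since a Freiman isomorphism preserves all additive quadruples, for any $T \subseteq X$ the sumset size $|T + X|$ equals $|\phi(T) + \tilde{X}|$, so it suffices to find a small $\tilde{T} \subseteq \tilde{X}$ with a large sumset $|\tilde{T} + \tilde{X}|$ and pull it back via $\phi^{-1}$.

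Next I would split into two regimes according to the size of $d$, exactly as the remark before the corollary suggests. Fix a threshold $d_0 = d_0(\alpha)$ to be chosen. If $d \le d_0$, apply Theorem~\ref{thm:T_small_d} with this $d$: it gives $\tilde{T} \subseteq \tilde{X}$ with $|\tilde{T}| \le C(d) \le C(d_0) =: C_1(\alpha)$ and $|\tilde{T} + \tilde{X}| \ge (d+1)|\tilde{X}| - 5(d+1)^3 \ge (d+1)|X| - 5(d_0+1)^3$. Since $X$ is assumed sufficiently large, the additive error $5(d_0+1)^3$ is at most $\tfrac{\alpha}{2}(d+1)|X|$ (say), so $|\tilde{T}+\tilde{X}| \ge (1-\alpha)\frac{(d+1)|X|}{2}$ with room to spare, and $|\tilde T| \le C_1 \le C_1 d$. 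If instead $d > d_0$, apply Theorem~\ref{thm:T_large_d} with $\gamma := \alpha/5$; this is legitimate provided $\gamma > d^{-1/3}$, which holds once $d > d_0 \ge (5/\alpha)^3$. That yields $\tilde{T} \subseteq \tilde{X}$ with $|\tilde{T}| \le 4(d+1)/\gamma = \tfrac{20}{\alpha}(d+1) \le \tfrac{40}{\alpha} d$ and $|\tilde{T} + \tilde{X}| \ge (1 - 5\gamma)\frac{(d+1)|\tilde{X}|}{2} = (1-\alpha)\frac{(d+1)|X|}{2}$. In both cases, setting $T := \phi^{-1}(\tilde{T}) \subseteq X$ and $C(\alpha) := \max\{C_1(\alpha), 40/\alpha\}$ gives $|T| \le C(\alpha)\, d = C(\alpha)\dim_f(X)$ and, by Freiman isomorphism, $|T + X| = |\tilde T + \tilde X| \ge (1-\alpha)\frac{(\dim_f(X)+1)|X|}{2}$, as required.

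There is essentially only one point that needs a little care, and it is the main (mild) obstacle: verifying that $|T+X| = |\phi(T) + \phi(X)|$ genuinely follows from $\phi$ being a Freiman isomorphism on $X$ — one has to check that distinctness of sums $t + x$ with $t \in T \subseteq X$ and $x \in X$ is controlled by the defining property $a+b = a'+b' \iff \phi(a)+\phi(b) = \phi(a')+\phi(b')$ applied to elements of $X$, which it is since $T \subseteq X$ means all four elements involved lie in $X$. The regime split is otherwise routine: one simply chooses $d_0(\alpha)$ large enough to both satisfy $(5/\alpha)^3 < d_0$ (so Theorem~\ref{thm:T_large_d} applies) and, together with ``$X$ sufficiently large'', to absorb the $-5(d+1)^3$ error in the small-$d$ case. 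No other difficulty arises.
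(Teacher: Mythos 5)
Your proof is correct and follows essentially the same route as the paper: split according to whether $\dim_f(X)$ is small or large relative to $\alpha$, apply Theorem~\ref{thm:T_small_d} (absorbing the $-5(d+1)^3$ error via ``$X$ sufficiently large'') or Theorem~\ref{thm:T_large_d} with $\gamma = \alpha/5$, and transfer the bound back through the Freiman isomorphism, which preserves $|T+X|$ since $T \subseteq X$. No gaps.
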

\begin{proof}
    Let $d := \dim_f(X)$, and let $\phi: X \mapsto X'$ be a Freiman isomorphism where $X' \subseteq \mathbb{Z}^d$ is a set of full rank.

    \begin{itemize}
    \item If $\alpha > 5 d^{-1/3}$, then by Theorem \ref{thm:T_large_d} there exists $T' \subseteq X'$ of size $|T'| \le 40 / \alpha$ such that
    $$
        |T' + X'| \ge (1 - \alpha) \frac{(d + 1)|X'|}{2}.
    $$
    
    \item If $\alpha \le 5 d^{-1/3}$, then by Theorem \ref{thm:T_small_d} there exists $T' \subseteq X'$ of size $|T'| \le C(d)$, and as $d = O(\alpha^{-3})$ we can write $|T'| \le C(\alpha)$, such that
    $$
        |T' + X'| \ge (d+1)|X'| - 5 (d+1)^3 = (d+1)|X'| - O(\alpha^{-9}) > \frac{(d+1)|X'|}{2},
    $$
    for $X'$ sufficiently large in terms of $\alpha^{-9}$.
    \end{itemize}
    
    In both cases, the claim follows from $|X' + T'| = |\phi^{-1}(X') + \phi^{-1}(T')|$.
\end{proof}

The following proposition is our last ingredient.

\begin{proposition} \label{prop:subset_large_sum}
    Suppose $T', X$ are finite subsets of some additive group. For any integer $1 \le t \le |T'|$, there exists $T \subseteq T'$ of size $t$ such that $|T + X| \ge \frac{t}{|T'|} |T' + X|$.
\end{proposition}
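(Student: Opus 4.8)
The plan is to select $T$ randomly and apply a first-moment/averaging argument. Concretely, pick $T$ uniformly at random among the $\binom{|T'|}{t}$ subsets of $T'$ of size exactly $t$. I will lower bound the expected size of $T + X$ by computing, for each element $z \in T' + X$, the probability that $z$ survives in $T + X$, and then sum these probabilities via linearity of expectation. Since the expectation is at least some value, there must exist a choice of $T$ achieving at least that value.

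The main step is the per-element probability computation. Fix $z \in T' + X$, and let $W_z = \{ a \in T' : z \in a + X \} = \{ a \in T' : z - a \in X \}$; this is non-empty precisely because $z \in T' + X$. We have $z \in T + X$ if and only if $T \cap W_z \neq \emptyset$. By a standard hypergeometric computation,
$$
    \Pr[T \cap W_z = \emptyset] = \frac{\binom{|T'| - |W_z|}{t}}{\binom{|T'|}{t}} = \prod_{i=0}^{t-1} \frac{|T'| - |W_z| - i}{|T'| - i} \le \left(1 - \frac{|W_z|}{|T'|}\right)^{t} \le 1 - \frac{t |W_z|}{|T'|} \cdot \frac{1}{|W_z|}?
$$
Actually the cleaner route is: $\Pr[T \cap W_z \neq \emptyset] \ge \Pr[\text{a fixed element of } W_z \text{ lies in } T] = t/|T'|$, since $|W_z| \ge 1$ and the marginal probability that any particular element of $T'$ is chosen equals $t/|T'|$. (More precisely, $\Pr[T \cap W_z \neq \emptyset] \ge \max_{a \in W_z}\Pr[a \in T] = t/|T'|$, using $|W_z|\ge 1$.) Hence $\mbE[|T + X|] = \sum_{z \in T' + X} \Pr[z \in T + X] \ge \frac{t}{|T'|}|T' + X|$, and a suitable $T$ exists.

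The only thing to be slightly careful about is that $t$ is required to be a positive integer with $t \le |T'|$, so the binomial coefficients and the uniform distribution over $t$-subsets are well defined, and the marginal inclusion probability is exactly $t/|T'|$ by symmetry. I do not anticipate a genuine obstacle here; the lone subtlety is making sure the bound $\Pr[T\cap W_z\neq\emptyset]\ge t/|T'|$ is justified cleanly (it follows from the inclusion of the event $\{a\in T\}$ for any single fixed $a\in W_z$, which holds because $W_z\neq\emptyset$), rather than attempting a more elaborate inclusion–exclusion that would only strengthen the inequality but is unnecessary.
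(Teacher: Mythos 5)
Your argument is correct, but it is a genuinely different proof from the one in the paper. You take a uniformly random $t$-subset $T \subseteq T'$ and use linearity of expectation: since each $z \in T' + X$ has a nonempty witness set $W_z \subseteq T'$ and any fixed element of $T'$ lies in $T$ with probability exactly $t/|T'|$, you get $\mbE[|T+X|] \ge \frac{t}{|T'|}|T'+X|$, so some $T$ achieves the bound. (The half-finished hypergeometric display in the middle of your write-up is garbled as an inequality chain, but since you explicitly discard it in favour of the ``fixed element of $W_z$'' bound, it does no harm; I would simply delete it.) The paper instead argues deterministically and greedily: it orders $T' = \{a_1, \ldots, a_{t'}\}$ so that each $a_i$ maximises the new contribution $d_i = |(a_i + X) \setminus (T'_{i-1} + X)|$, observes that $\sum_i d_i = |T'+X|$ and that the maximality at each step forces $d_1 \ge d_2 \ge \cdots$, whence the first $t$ elements already capture at least a $t/t'$ fraction of $T'+X$. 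The two proofs are of comparable length; the paper's greedy version is constructive and exposes the monotone-marginal-gain structure, while yours is arguably cleaner in that it needs nothing beyond $|W_z| \ge 1$ and the exact marginal probability $t/|T'|$, at the cost of being non-constructive.
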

\begin{proof}
    Let $a_1 \in T'$ be an element which maximises $|a + X|$, and for $i \in \{2, \ldots, t' := |T'|\}$, iteratively, choose $a_i$ to be an element $a \in T' \setminus T'_{i-1}$ which maximises $|(a + X) \setminus (T'_{i-1} + X)|$, where $T'_{i-1} = \{a_1, \ldots, a_{i-1}\}$. 
    
    Let $d_i := |(a_i + X) \setminus (T'_{i-1} + X)|$, and note that $|T' + X| = \sum_{i = 1}^{t'} d_i$. By the definition of the ordering, we have $d_i \ge d_{i+1}$ thus $d_1 + \ldots + d_t \ge \frac{t}{t'} |T' + X|$ for any $t$. Therefore, the set $T = \{a_1, \ldots, a_t\}$ satisfies the desired property.
\end{proof}

We are now ready to prove Lemma \ref{lemma:fingerprint}.

\begin{proof}[Proof of Lemma \ref{lemma:fingerprint}]
    Set $\eps = \alpha/L$, where $L = \log_{1/(1-\alpha)}(2K)$, and let $X \subseteq A$ be an $(\eps, \alpha)$-Freiman-robust subset of size $|X| > (1 - \eps L)|A|$ as guaranteed by Lemma \ref{lemma:large_robust}. In particular, we have $|X| \ge (1 - \alpha)|A|$. Let $d := \dim_f(X)$. Let $C = C(\alpha)$ be as given by Corollary \ref{cor:small_fingerprint}. We define $F \subseteq A$ in two phases.
    
    \paragraph{Phase I.} Set $X_1 = X$ and $\ell = \sqrt{|A|/d}$, and repeat the following for $i \in \{1, \ldots, \ell \}$: 
    \begin{itemize}
        \item Let $T_i \subseteq X_i$ be a set of size $|T_i| \le 2Cd$ such that
        \begin{equation} \label{eq:T_X}
            |T_i + X_i| \ge (1 - 2\alpha) \frac{(d + 1)|X_i|}{2}.
        \end{equation}
        We shall prove that such $T_i$ exists shortly.
        \item Set $X_{i+1} = X_i \setminus T_i$, and proceed to the next iteration.
    \end{itemize}
    For each $i \le \ell$ we have
    $$
        |X_i| = |X| - \sum_{j < i} |T_j| \ge |X| -2C \sqrt{d |A|} \ge \left( 1 - 2C \sqrt{2d / |X|} \right)|X|,
    $$
    where the second inequality follows from $|X| \ge (1 - \alpha)|A|$ (assuming $\alpha < 1/2$). From the upper bound on $K$ and $d < 2K$ we conclude
    $$
        \eps = \Theta_\alpha(1/\log(K)) > 4C \sqrt{K / |X|},
    $$
    thus
    $$
        |X_i| \ge (1 - \eps)|X|.
    $$
    As $X$ is $(\eps, \alpha)$-Freiman-robust, we have $\dim_f(X_i) \ge (1 - \alpha)d$. Corollary \ref{cor:small_fingerprint} gives a set $T_i' \subseteq X_i$ such that $|T_i'| \le C \dim_f(X_i)$ and
    $$
        |T_i' + X_i| \ge (1 - \alpha) \frac{(\dim_f(X_i) + 1)|X_i|}{2}.
    $$
    If $\dim_f(X_i) \le d + 1$ or $|T_i'| \le Cd$ then we can take $T_i := T_i'$. Otherwise, a subset $T_i \subseteq T_i'$ of size $C(d+1) \le 2Cd$ and which satisfies \eqref{eq:T_X} exists by Proposition \ref{prop:subset_large_sum}.
    
    For convenience, we record that \eqref{eq:T_X} and $|X_i| \ge (1 - \eps)|X| \ge (1 - 2\alpha)|A|$ imply
    \begin{equation} \label{eq:T_i_A}            
        |T_i + X| \ge |T_i + X_i| \ge (1 - 2\alpha) \frac{(d + 1)(1 - 2\alpha)|A|}{2} \ge (1 - 4\alpha) \frac{(d+1)|A|}{2}.
    \end{equation}


    \paragraph{Phase II.} The set $F_T = T_1 \cup \ldots \cup T_\ell$ forms a part of the final set $F$. The remaining of $F$ is chosen as follows. Set $Y = F_T \hat{+} F_T$ and $F' = \emptyset$, and as long as $|Y| < (1 - 5\alpha) (d + 1)|A|/2$ choose an element $x \in X$ which maximises $|(F_T + x) \setminus Y|$, add it to $F'$ and update $Y := Y \cup (F_T + x)$. Once the procedure has terminated, set $F := F_T \cup F'$. Note $|F_T + X| \ge (1 - 4\alpha)(d+1)|A|/2$, thus the procedure eventually finishes. It remains to show that $F'$ is not too large. 
    
    Let $Y_i$ denote the set $Y$ at the beggining of the $i$-th round. We claim that if $|Y_i| < (1 - 5\alpha)(d+1)|A|/2$, that is the procedure has not terminated yet, then
    \begin{equation} \label{eq:Y_increment}            
        |Y_{i+1}| \ge |Y_i| + \frac{\alpha}{2} \sqrt{d |A|}. 
    \end{equation}
    which implies that the procedure terminates after at most $2\alpha^{-1} \sqrt{d |A|}$ rounds. To this end, consider the auxiliary bipartite graph $B_i$ on vertex sets $X$ and $\mathbb{Z}_n \setminus Y_i$, where we put an edge between $x \in X$ and $y \in \mathbb{Z}_n \setminus Y_i$ if $y - x \in F_T$. By \eqref{eq:T_i_A}, for each $j \in [\ell]$ we have
    $$
        |(T_j + X) \setminus Y_i| \ge (1 - 4 \alpha) \frac{(d+1)|A|}{2} - |Y_i| > \alpha \frac{(d+1) |A|}{2}.
    $$
    As the sets $T_1, \ldots, T_\ell$ are disjoint, we conclude that the graph $B_i$ has at least $\ell \alpha d |A| / 2$ edges. For $x \in X$, the quantity $|(F_T + x) \setminus Y_i|$ is equal to the degree of $x$ in $B_i$. Therefore, for $x$ which maximises $|(F_T + x) \setminus Y_i|$ we have
    $$
        |(F_T + x) \setminus Y_i| \ge \frac{\ell \alpha d |A|}{2 |X|} = \frac{\alpha}{2} \sqrt{d |A|},
    $$
    which proves \eqref{eq:Y_increment}. The size of $F$ is at most $C' \sqrt{d|A|}$ for $C' = C(\alpha) + 2\alpha^{-1}$, thus $C' = C'(\alpha)$. By adding arbitrary elements from $X \setminus F$ to $F$, we can further guarantee that $|F| = C' \sqrt{d|A|}$.
\end{proof}

\section{Freiman-Ruzsa theorem for cyclic groups}

The following is the main result of this section.

\begin{lemma} \label{lemma:freiman_cyclic}
    Let $n \in \mathbb{N}$ be a sufficiently large prime and $A \subseteq \mathbb{Z}_n$ be a subset such that $|A+A| \le K|A|$, for sufficiently large $K$ such that $e^{O(K^{1 + o(1)})} |A| = o(n)$. Then $A$ is contained in an $\dim_f(A)$-GAP of size at most $e^{O(K^{1 + o(1)})} |A|$.
\end{lemma}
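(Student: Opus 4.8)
The plan is to follow Chang's proof of Freiman's theorem \cite{chang02polynomial}, in the exposition of Green \cite{green_notes}, organised around the following steps: a reduction moving the problem from $\mathbb{Z}_n$ to a full-rank set in $\mathbb{Z}^{d}$ with $d=\dim_f(A)$, a modeling step moving it into a finite cyclic group whose order is comparable to $|A|$, the Fourier-analytic core (Bogolyubov together with Chang's spectral lemma), and a dimension-reduction step back down to $d$. Throughout I would use the Plünnecke--Ruzsa inequalities $|kA-\ell A|\le K^{k+\ell}|A|$, so that every iterated sumset of $A$ of bounded order has size $K^{O(1)}|A|$, which is $o(n)$ by hypothesis.

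By Observation~\ref{obs:upper_bound}, $d:=\dim_f(A)<2K$, and by definition of the Freiman dimension there is a full-rank set $A^\ast\subseteq\mathbb{Z}^{d}$ and a Freiman isomorphism $\phi:A^\ast\to A$ with $|A^\ast+A^\ast|\le K|A^\ast|$. The goal then becomes to contain $A^\ast$ in a proper $d$-dimensional GAP $Q^\ast\subseteq\mathbb{Z}^{d}$ of size $e^{O(K^{1+o(1)})}|A^\ast|$; given this, one transports back: since $A^\ast$ has full rank, $\phi$ extends to an affine map $\Phi:\mathbb{Z}^{d}\to\mathbb{Z}_n$ — the only obstruction being that the determinant $\Delta$ of a matrix of $d$ well-chosen affinely independent differences of points of $A^\ast$ must be invertible modulo $n$, which holds since $\Delta\neq 0$ and $|\Delta|$ can be kept below $\operatorname{size}(Q^\ast)=e^{O(K^{1+o(1)})}|A|=o(n)$ while $n$ is prime. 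Then $A=\Phi(A^\ast)\subseteq\Phi(Q^\ast)$, a $d$-GAP whose formal size equals that of $Q^\ast$, which is the conclusion. It is this step that uses both the primality of $n$ and the hypothesis $e^{O(K^{1+o(1)})}|A|=o(n)$.

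For the quantitative Freiman statement about $A^\ast$, I would run the Bogolyubov--Chang argument after a modeling step. Ruzsa's modeling lemma with $s=8$ produces $A^\ast_0\subseteq A^\ast$ with $|A^\ast_0|\ge|A^\ast|/8$ Freiman $8$-isomorphic to a set $B$ inside a cyclic group $\mathbb{Z}_N$ of prime order $N=K^{O(1)}|A^\ast|$, of density $\mu=|B|/N\ge K^{-O(1)}$; the point of passing to $\mathbb{Z}_N$ is that $\log(1/\mu)=O(\log K)$ is bounded. In $\mathbb{Z}_N$, Parseval bounds the small Fourier spectrum, so for $\rho=\Theta(\sqrt\mu)$ the set $2B-2B$ contains a Bohr set $\mathrm{Bohr}(\Gamma,\eta)$ with $\Gamma$ generating $\mathrm{Spec}_\rho(B)$, $|\Gamma|=O(\rho^{-2}\log(1/\mu))=K^{O(1)}$ by Chang's spectral lemma, and $\eta=\Theta(1/|\Gamma|)$; by geometry of numbers this Bohr set contains a proper GAP $P$ of dimension $|\Gamma|$ and size $\ge N/e^{O(K^{1+o(1)})}$. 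Ruzsa's covering lemma (using $|B+P|\le|3B-2B|\le K^{O(1)}|B|$), iterated in Chang's manner so that the set of translates is absorbed into a GAP of the same type rather than naively contributing its cardinality to the dimension, places $B$ inside a proper GAP of dimension $K^{O(1)}$ and size $e^{O(K^{1+o(1)})}|B|$; unfolding this GAP (taken proper of order $8$) through the modeling isomorphism, plus one more covering step to recover all of $A^\ast$ from $A^\ast_0$, gives $A^\ast$ inside a GAP of dimension $K^{O(1)}$ in $\mathbb{Z}^{d}$ of size $e^{O(K^{1+o(1)})}|A^\ast|$.

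The main obstacle I foresee is bridging the gap between "dimension $K^{O(1)}$" and "dimension exactly $d=\dim_f(A)$": one cannot simply collapse a $K^{O(1)}$-dimensional GAP to a $d$-dimensional one by re-expressing its generators in a basis chosen from among them, since that rescales each surviving side by a factor comparable to the whole size and blows the volume up to $|A^\ast|^{\Omega(d)}$. I would instead try to keep everything full-rank-aware throughout: work from the start with the full-rank model, take each intermediate GAP proper of order $8$ and supporting $A^\ast$ with full rank, so that the unfolding from $\mathbb{Z}_N$ back to $\mathbb{Z}^{d}$ is an affine map onto $\mathbb{Z}^{d}$ itself rather than onto an uncontrolled larger lattice, and let a final passage through a full-rank Freiman isomorphism force the dimension down to $d$ while the size stays $e^{O(K^{1+o(1)})}|A^\ast|$. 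The details I expect to require genuine care are: that the relevant sets stay full rank after passing to subsets and after the covering steps; that all Freiman isomorphisms invoked have order at least $8$ so the unfolding and affine-extension steps are valid; bounding $|\Delta|$ above so that it is invertible modulo $n$; and the volume bookkeeping that keeps the final bound at $e^{O(K^{1+o(1)})}$.
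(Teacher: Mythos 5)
There are genuine gaps, one quantitative and one structural. Quantitatively, the Fourier core you propose (Ruzsa modeling plus Bogolyubov with Chang's spectral lemma) cannot deliver the exponent $K^{1+o(1)}$: after modeling, the density of $B$ in $\mathbb{Z}_N$ is only $K^{-O(1)}$, so Chang's lemma bounds the dimension of the Bohr set/GAP by a fixed \emph{power} of $K$ (in Chang's own bookkeeping the final size loss is $\exp\bigl(O(K^2\log^3 K)\bigr)$, i.e.\ $e^{K^{2+o(1)}}$ at best). The nearly linear exponent in the lemma is exactly the content of Sanders's quasi-polynomial Bogolyubov--Ruzsa theorem (via Croot--Sisask almost periodicity), which the paper invokes as a black box (Theorem \ref{thm:sanders}); no amount of careful covering in the Chang scheme recovers it. Note also that in the paper primality of $n$ and the hypothesis $e^{O(K^{1+o(1)})}|A| = o(n)$ enter by forcing the subgroup parts $H, H'$ of the coset progressions to be trivial, not through a determinant-invertibility argument.

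Structurally, two steps of your plan do not go through as described. First, the transport step: a Freiman isomorphism of order $2$ from a full-rank set $A^\ast \subseteq \mathbb{Z}^d$ need not extend to an affine map $\mathbb{Z}^d \to \mathbb{Z}_n$ — for instance $\{0,1,3\} \subseteq \mathbb{Z}$ has no nontrivial additive quadruples, so \emph{every} map on it is a Freiman $2$-isomorphism, yet almost none extend affinely — so ``prove Freiman in $\mathbb{Z}^d$ and push the containing GAP forward through $\Phi$'' is not available from a $2$-isomorphism alone. The paper goes the other way: it builds the structure around $A$ inside $\mathbb{Z}_n$ first, and uses Cwalina--Schoen (Lemma \ref{lemma:cwalina_schoen}) to make the convex coset progression $2$-proper, so that $A$ can be pulled back \emph{injectively} into $\mathbb{Z}^{d'}$ and the map back to $\mathbb{Z}_n$ is the restriction of a genuine affine homomorphism, under which images of GAPs are GAPs. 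Second, the dimension reduction to $\dim_f(A)$, which you correctly identify as the main obstacle, is left unresolved: ``keep everything full-rank-aware'' does not explain how a $K^{O(1)}$-dimensional GAP becomes a $\dim_f(A)$-dimensional one without the volume blow-up you yourself point out. The paper's concrete mechanism is to translate so that the pulled-back copy of $A$ contains $0$, intersect the convex body with the linear span $H$ of that copy — whose lattice has dimension at most $\dim_f(A)$ by the very definition of Freiman dimension — and only then convert to a GAP via the discrete John theorem of van Hintum--Keevash (Theorem \ref{thm:john}), which costs only $O(d'')^{3d''} = e^{O(K\log K)}$. Working with convex (coset) progressions rather than GAPs until the last moment is precisely what makes this step cheap; within your GAP-based scheme the obstacle you foresee remains.
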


The proof gives $K^{1 + o(1)} = O(K \log^C(2K))$ for some absolute constant $C$, but for brevity we choose not to bother ourselves with lower order terms. The proof closely follows (parts of) the exposition of Green \cite{green_notes}, which is in turn based on the work of Chang \cite{chang02polynomial}. In particular, no new ideas are required. We start by collecting some definitions and known results.

Given an abelian group $G$, a \emph{$d$-dimensional coset progression} is a subset of $G$ the form $P + H$ where $P$ is a $d$-GAP and $H$ a subgroup of $G$. The size of a coset progression is defined as $\mathrm{size}(P + H) = \mathrm{size}(P)|H|$. The following version of the Freiman-Ruzsa theorem, due to Sanders \cite[Theorem 11.4]{sanders12bogolyubov}, is our starting point. Once we obtain a coset progression guaranteed by it, we use it to move from $G$ to $\mathbb{Z}^d$.

\begin{theorem} \label{thm:sanders}
    Suppose that $G$ is a discrete abelian group and $A \subset G$ is finite with $|A+A| \le K|A|$. Then there exists $d = d(K) = O(K \log^C(2K))$, for some absolute constant $C$, such that $A$ is contained in a $d$-dimensional coset progression $P  + H$ of size $\textrm{size}(P + H) \le e^{d}|A|$.
\end{theorem}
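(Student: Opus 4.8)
The statement to prove is Theorem \ref{thm:sanders} itself, so the goal is to assemble the Freiman--Ruzsa theorem from standard machinery rather than treat it as a black box. The plan is to follow the Bogolyubov--Ruzsa route as formulated by Sanders. First I would recall that $|A+A|\le K|A|$ together with the Plünnecke--Ruzsa inequalities gives control on higher sumsets and, via the Ruzsa covering lemma, reduces the problem to understanding the structure of $2A-2A$. The crucial input is the quantitatively strong Bogolyubov--Ruzsa lemma (Sanders's polynomial bound): there is a coset progression $P+H\subseteq 2A-2A$ of rank $O(\log^{O(1)}(2K))$ and size $\ge e^{-O(\log^{O(1)}(2K))}|A|$. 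This is the only place a genuinely hard theorem is invoked.

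Second, given such a large coset progression $Q:=P+H$ sitting inside $2A-2A$, I would use Ruzsa's covering argument: since $|A+Q|\le |A+2A-2A|\le K^{O(1)}|A|\le K^{O(1)}|Q|$, a maximal $Q$-separated subset of $A$ has size $\le K^{O(1)}$, and hence $A$ is covered by $\le K^{O(1)}$ translates of $Q-Q$. Writing those translates as an ambient $\le \log_2(K^{O(1)})=O(\log(2K))$-dimensional progression of terms $\{-1,0,1\}$ and absorbing the doubling $Q-Q = (P-P)+H$ (a coset progression of the same rank, with size multiplied by a bounded power of $2$ per dimension), one obtains that $A$ lies in a coset progression $P'+H$ of rank $d = O(K\log^{C}(2K))$ — the dominant contribution being the covering term, which costs one dimension per translate and there are $K^{O(1)}$ of them; here I should be slightly careful that the bound in Sanders's formulation is $O(K\log^{C}(2K))$ rather than $\mathrm{poly}\log(K)$, which comes precisely from counting the covering translates. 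Finally I would check the size bound: $\mathrm{size}(P'+H) \le 2^{d}\cdot\mathrm{size}(Q-Q)\le 2^{d}\cdot 4^{\mathrm{rank}(Q)}\mathrm{size}(Q)$, and since $\mathrm{size}(Q)\le |2A-2A|\le K^{4}|A|$ by Plünnecke--Ruzsa, this is at most $e^{O(d)}|A|\le e^{d'}|A|$ after adjusting the implicit constant in $d$, which yields the claimed form $e^{d}|A|$.

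\textbf{Main obstacle.} The real content — the polynomial Bogolyubov--Ruzsa lemma producing a coset progression of polylogarithmic rank and density $e^{-\mathrm{polylog}(2K)}$ inside $2A-2A$ — is exactly Sanders's deep theorem, and reproving it is out of scope; the honest thing is to cite it. The genuinely delicate bookkeeping step is the covering-to-progression conversion: turning $K^{O(1)}$ translates of a rank-$O(\mathrm{polylog})$ coset progression into a single coset progression, tracking how the rank adds up (it is the $K^{O(1)}$ translates, not the polylog rank, that dominate and give the $O(K\log^{C}(2K))$ bound) and how each covering/doubling step multiplies the size by a controlled factor so that the final size stays $e^{O(d)}|A|$. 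Everything else — Plünnecke--Ruzsa, Ruzsa's covering lemma, the passage from $A+A$ small to $2A-2A$ small — is routine additive combinatorics that I would cite rather than redo.
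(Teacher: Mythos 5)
The paper does not prove this statement at all: it is quoted verbatim from Sanders \cite[Theorem 11.4]{sanders12bogolyubov} and used as a black box, so a citation is all that is expected here. Your high-level route (Sanders's Bogolyubov--Ruzsa lemma inside $2A-2A$, then a covering argument to pull $A$ into a single coset progression) is indeed how the result is actually derived, so the skeleton is right. But the covering step --- which you correctly single out as the delicate part --- is where your sketch breaks down, in two concrete places. First, the coset progression $Q$ produced by the Bogolyubov--Ruzsa lemma has size only $e^{-O(\log^{C}(2K))}|A|$, not $K^{-O(1)}|A|$; hence the Ruzsa covering number $|A+Q|/|Q| \le |3A-2A|/|Q|$ is $e^{O(\log^{C}(2K))}$, which is super-polynomial in $K$, not $K^{O(1)}$ as you claim. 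Second, a set of $N$ translates cannot be ``written as an ambient $\log_2 N$-dimensional progression of terms $\{-1,0,1\}$'': the span $\{\sum_{t} \eps_t t : \eps_t \in \{-1,0,1\}\}$ of $N$ arbitrary group elements is a GAP of dimension $N$, one dimension per element, so a single Ruzsa covering step would cost $e^{O(\log^{C}(2K))}$ dimensions --- far more than the claimed $O(K\log^{C}(2K))$. Your two accountings of the rank ($O(\log(2K))$ in one sentence, ``one dimension per translate and there are $K^{O(1)}$ of them'' in the next) contradict each other, and neither delivers the stated bound.

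The missing ingredient is Chang's covering lemma (the iterated form of Ruzsa covering): under $|A+A|\le K|A|$ and $|A+Q|\le L|Q|$ it places $A$ inside $Q-Q$ plus the span of $O(K\log L)$ carefully chosen elements, and with $\log L = O(\log^{C}(2K))$ this is exactly where the rank $O(K\log^{C}(2K))$ and the size factor $3^{O(K\log^{C}(2K))} = e^{O(d)}$ come from. With that lemma substituted for the single-step covering, your outline matches Sanders's own derivation; without it, the rank and size bounds you assert do not follow. Since the paper simply imports the theorem, the cleanest fix is to do the same and cite Sanders, as you already do for the Bogolyubov--Ruzsa input.
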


To state the next ingredient, we need some notions which further extend that of a generalised arithmetic progressions. Suppose that $B \subset \mathbb{R}^d$ is closed, centrally symmetric and convex such that $B \cap \mathbb{Z}^d$ spans $\mathbb{R}^d$ as a real vector space. For a given group homomorphism $\phi: \mathbb{Z}^d \to G$, we refer to the image $X = \phi(B \cap \mathbb{Z}^d)$ as a \emph{convex progression of dimension $d$}. The size of $X$ is $\mathrm{size}(X) = |B \cap \mathbb{Z}^d|$. For a subgroup $H$ of $G$, we call $X + H$ a \emph{convex coset progression}. By analogy with coset progressions, we define $\mathrm{size}(X + H) = \mathrm{size}(X)|H|$. We say that $X+H$ is \emph{$s$-proper}, for some $s \in \mathbb{N}$, if $\phi(x_1) - \phi(x_2) \in H$ implies $x_1 = x_2$ for all $x_1, x_2 \in sB \cap \mathbb{Z}^d$, where $sB$ denotes the dilate $\{sx \colon x \in B\}$ of $B$.

The following is due to Cwalina and Schoen \cite[Lemma 5]{cwalina13linear}.
\begin{lemma} \label{lemma:cwalina_schoen}
    Let $G$ be an abelian group, and suppose that $X + H$ is a convex coset progression of dimension $d$. Then for every $s \in \mathbb{N}$ there exists an $s$-proper convex coset progression $X' + H'$ of dimension $d' \le d$ and $\textrm{size}(X' + H') \le  s^d d^{O(d)} \textrm{size}(X + H)$, such that $X + H \subseteq X' + H'$.
\end{lemma}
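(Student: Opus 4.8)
The plan is to translate $s$-properness into a statement about a single lattice and then perform ``surgery'' on the defining data. Write $X=\phi(B\cap\mathbb{Z}^d)$ for a homomorphism $\phi\colon\mathbb{Z}^d\to G$ and a closed, centrally symmetric, convex $B\subseteq\mathbb{R}^d$ whose lattice points span $\mathbb{R}^d$, and set $\Lambda:=\phi^{-1}(H)=\{v\in\mathbb{Z}^d:\phi(v)\in H\}$, a sublattice of $\mathbb{Z}^d$ of rank $r\le d$. Since $B$ is convex and symmetric, $sB-sB=2sB$, and $\phi(x_1)-\phi(x_2)\in H$ exactly when $x_1-x_2\in\Lambda$; hence $X+H$ is $s$-proper if and only if $\Lambda\cap 2sB=\{0\}$. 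So the goal is to enlarge $H$ and replace $(\mathbb{Z}^d,B,\phi)$ by lower- (or equal-) dimensional data for which the new obstruction lattice avoids $2sB$, while preserving containment and multiplying the size by at most $s^d d^{O(d)}$.

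The construction I would use collapses the ``short'' directions of $\Lambda$. Let $\lambda_1\le\cdots\le\lambda_r$ be the successive minima of $\Lambda$ with respect to $B$ (inside $\Lambda\otimes\mathbb{R}$, where $B$ cuts out a symmetric convex body spanning that subspace). If $\lambda_1>2s$ there is nothing to do. Otherwise I would fix a cutoff $1\le k\le r$, chosen by a pigeonhole/gap argument among the $\lambda_i$ so that $\lambda_k$ stays within a modest dimensional factor of $s$ while $\lambda_{k+1}$ (if $k<r$) exceeds the distortion constant below; let $W:=\mathrm{span}_{\mathbb{R}}(v_1,\dots,v_k)$ for minimal vectors $v_i$ realising the $\lambda_i$, so that $W\cap\mathbb{Z}^d$ is a primitive rank-$k$ sublattice of $\mathbb{Z}^d$ and $\Lambda\cap W$ has successive minima $\lambda_1,\dots,\lambda_k$; set $H':=H+\phi(W\cap\mathbb{Z}^d)$; choose a lattice complement $M\cong\mathbb{Z}^{d-k}$ of $W\cap\mathbb{Z}^d$ in $\mathbb{Z}^d$ that is \emph{balanced}, meaning the projection $\pi\colon\mathbb{R}^d\to M\otimes\mathbb{R}$ along $W$ distorts $B$-lengths only by a dimensional factor (a standard reduced-basis construction provides such an $M$); and put $B':=\pi(B)$, $X':=(\phi|_M)(B'\cap\mathbb{Z}^{d-k})$, $d':=d-k$.

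Then I would carry out three checks. \emph{Containment:} writing $x\in B\cap\mathbb{Z}^d$ as $x=u+v$ with $u\in W\cap\mathbb{Z}^d$ and $v\in M$, one has $\phi(x)\equiv(\phi|_M)(v)\pmod{H'}$ and $v=\pi(x)\in B'\cap\mathbb{Z}^{d-k}$, so $X+H\subseteq X'+H'$. \emph{Properness:} the new obstruction lattice is $\Lambda^+\cap M$, where $\Lambda^+:=\Lambda+(W\cap\mathbb{Z}^d)=\phi^{-1}(H')$; a nonzero $w$ in it lying in $2sB'$ can be written $w=\lambda+u_0$ with $\lambda\in\Lambda$, $u_0\in W\cap\mathbb{Z}^d$, and after replacing $\lambda$ by the shortest representative of $\lambda+(\Lambda\cap W)$ --- the covering radius of $\Lambda\cap W$ is $O(k^2 s)$ since that lattice has a basis of $B$-length at most $i\lambda_i$ --- the balancedness of $\pi$ forces $\|\lambda\|_B<\lambda_{k+1}$, hence $\lambda\in W$, so $w=\pi(w)$ has zero $M$-component, a contradiction (when $k=r$ this degenerates, $\Lambda^+\cap M=0$ directly). \emph{Size:} with $m:=[W\cap\mathbb{Z}^d:\Lambda\cap W]$, so that $|H'/H|\le m$, one gets $\mathrm{size}(X'+H')\le|B'\cap\mathbb{Z}^{d-k}|\,m\,|H|$; Minkowski's second theorem together with $\det(W\cap\mathbb{Z}^d)\ge 1$ gives $m\le\det(\Lambda\cap W)\le k!\,\lambda_1\cdots\lambda_k\,\mathrm{vol}_k(B\cap W)$, while a Rogers--Shephard-type slicing inequality plus standard lattice-point/volume comparisons give $|B'\cap\mathbb{Z}^{d-k}|\cdot\mathrm{vol}_k(B\cap W)\le d^{O(d)}|B\cap\mathbb{Z}^d|$; since $\lambda_1,\dots,\lambda_k$ are $O(s)$ up to a dimensional factor, the product is at most $s^d d^{O(d)}|B\cap\mathbb{Z}^d|$.

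The hard part is this last size estimate: one needs the index $m$ by which $H$ grows to be compensated exactly by the loss of lattice points when projecting off the short directions --- a geometry-of-numbers bookkeeping combining Minkowski's first and second theorems, a slicing inequality, and the passage between lattice-point counts and volumes --- and one must choose the cutoff $k$ (and the complement $M$) carefully enough that properness survives with the clean threshold $2s$ and the blow-up stays $s^d d^{O(d)}$ rather than the weaker bound a careless one-dimension-at-a-time iteration produces. The auxiliary inputs --- existence of a dimensionally-balanced lattice complement, comparison of $|sB\cap\mathbb{Z}^d|$ with $|B\cap\mathbb{Z}^d|$ for symmetric convex $B$, and the determinant bound for primitive sublattices --- are all classical, which is why the statement is quoted here rather than proved.
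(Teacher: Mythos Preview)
The paper does not prove this lemma at all; it is quoted as \cite[Lemma~5]{cwalina13linear} and used as a black box, exactly as you yourself note in your closing sentence. There is therefore no paper proof to compare your sketch against --- your proposal and the paper agree that this is a cited input, and your geometry-of-numbers outline (collapse the short directions of the obstruction lattice $\Lambda=\phi^{-1}(H)$, enlarge $H$ accordingly, control the size via Minkowski's second theorem and slicing) is in the spirit of the original Cwalina--Schoen argument, though as you say the bookkeeping in the size estimate is where the actual work lies.
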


Finally, to go back from convex progressions to generalised arithmetic progressions, we make use of a recent result of van Hintum and Keevash \cite{vanHintum2024john} on discrete John's theorem.

\begin{theorem} \label{thm:john}
    Suppose $B \subset \mathbb{R}^d$ is bounded, centrally symmetric and convex. Then $B \cap \mathbb{Z}^d$ is contained in a centred $d$-GAP $P$ in $\mathbb{Z}^d$ of size $O(d)^{3d} |B \cap \mathbb{Z}^d|$.
\end{theorem}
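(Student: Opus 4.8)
\textbf{Proof plan for Lemma \ref{lemma:freiman_cyclic}.}

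The plan is to chain together the four ingredients already collected in this section, ending with a dimension-reduction step that replaces the crude bound $d = O(K\log^C(2K))$ coming from Sanders with the optimal Freiman dimension $\dim_f(A)$. First I would apply Theorem \ref{thm:sanders} with $G = \mathbb{Z}_n$ to obtain $d_0 = O(K\log^C(2K)) = K^{1+o(1)}$ such that $A \subseteq P_0 + H_0$ for a $d_0$-dimensional coset progression with $\mathrm{size}(P_0 + H_0) \le e^{d_0}|A|$. Since $n$ is prime, $H_0$ is either trivial or all of $\mathbb{Z}_n$; the latter is impossible because $\mathrm{size}(P_0 + H_0) \ge |H_0| = n$ would contradict $e^{O(K^{1+o(1)})}|A| = o(n)$, so $H_0 = \{0\}$ and $A$ lies in a genuine $d_0$-GAP $P_0$ in $\mathbb{Z}_n$ of size at most $e^{d_0}|A|$. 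A $d_0$-GAP is trivially a convex progression (take $B$ to be the box $\prod[-N_i, N_i]$ and $\phi$ the obvious homomorphism), so we may regard $A \subseteq X_0 + H_0$ as a convex coset progression.

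Next I would make this proper: apply Lemma \ref{lemma:cwalina_schoen} with a suitable $s$ (say $s$ a small absolute constant, or $s = 2$, which suffices for $2$-properness / Freiman $2$-isomorphism purposes) to get an $s$-proper convex coset progression $X' + H'$ of dimension $d' \le d_0$, with $A \subseteq X' + H'$ and $\mathrm{size}(X' + H') \le s^{d_0} d_0^{O(d_0)}\,e^{d_0}|A| = e^{O(d_0 \log d_0)}|A| = e^{O(K^{1+o(1)})}|A|$; here the $\log d_0$ factor is harmless since $d_0 = K^{1+o(1)}$ and $\log d_0 = O(\log K)$ gets absorbed into the $o(1)$ in the exponent. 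Primality of $n$ again forces $H' = \{0\}$ (any nontrivial subgroup is all of $\mathbb{Z}_n$, contradicting the size bound), so $A$ is contained in an $s$-proper convex progression $X' = \phi(B' \cap \mathbb{Z}^{d'})$ in $\mathbb{Z}_n$ with $|B' \cap \mathbb{Z}^{d'}| \le e^{O(K^{1+o(1)})}|A|$. By Theorem \ref{thm:john}, $B' \cap \mathbb{Z}^{d'}$ is contained in a centred $d'$-GAP $Q$ in $\mathbb{Z}^{d'}$ with $\mathrm{size}(Q) \le O(d')^{3d'}|B' \cap \mathbb{Z}^{d'}| = e^{O(K^{1+o(1)})}|A|$, and then $\phi(Q)$ is a (shifted, via the original $v_0$ from $P_0$) $d'$-GAP in $\mathbb{Z}_n$ containing $A$ of size at most $e^{O(K^{1+o(1)})}|A|$. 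At this point we have covered $A$ by a $d'$-GAP with $d' \le d_0 = K^{1+o(1)}$.

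The final, and genuinely the delicate, step is to bring the dimension down from $d'$ to $\dim_f(A)$. By definition of Freiman dimension there is a set $A^* \subseteq \mathbb{Z}^{\dim_f(A)}$ of full rank that is Freiman isomorphic to $A$; I would run the entire argument above instead on $A^*$ (whose doubling is the same, $|A^*+A^*| = |A+A| \le K|A^*|$, since Freiman isomorphism preserves additive quadruples and hence the sumset cardinality), obtaining a $d^*$-GAP $P^* \subseteq \mathbb{Z}^{\dim_f(A)}$ with $A^* \subseteq P^*$, $d^* \le K^{1+o(1)}$, and $\mathrm{size}(P^*) \le e^{O(K^{1+o(1)})}|A|$. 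Now $A^*$ has full rank in $\mathbb{Z}^{\dim_f(A)}$, so $P^* \supseteq A^*$ must also span $\mathbb{Q}^{\dim_f(A)}$, forcing $d^* \ge \dim_f(A)$; conversely $\dim_f(A)$ cannot exceed $d^*$ either, since a $d^*$-GAP is Freiman-isomorphic (via the standard "base expansion" / proper-progression argument — or directly, since one can realise a proper $d^*$-GAP as a full-rank subset of $\mathbb{Z}^{d^*}$) to a subset of $\mathbb{Z}^{d^*}$, and any set contained in it has Freiman dimension at most $d^*$. Hence $d^* = \dim_f(A)$ — here is where I would need to ensure the GAP produced is proper (or at least $2$-proper), which is exactly why the Cwalina--Schoen properness step was inserted; a non-proper GAP need not witness its own dimension correctly. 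Transporting $P^*$ back through the Freiman isomorphism $A^* \to A$ — which extends to the (finite) ambient GAP structure because properness makes the isomorphism "rigid" enough to extend to a $d^*$-GAP in $\mathbb{Z}_n$ of the same size — yields the desired $\dim_f(A)$-GAP containing $A$ of size $e^{O(K^{1+o(1)})}|A|$. The main obstacle is precisely this last transport/properness bookkeeping: making sure that the GAP obtained in $\mathbb{Z}^{\dim_f(A)}$ is proper enough that (i) its dimension genuinely equals $\dim_f(A)$ and (ii) the Freiman isomorphism $A^* \cong A$ lifts to map the covering GAP into $\mathbb{Z}_n$ without collapsing or inflating its size; everything else is a routine chaining of size bounds, all of which stay within $e^{O(K^{1+o(1)})}|A|$ because at each stage the multiplicative overhead is at most $e^{O(d_0\log d_0)} = e^{O(K^{1+o(1)})}$.
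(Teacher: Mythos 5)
Your proposal does not address the statement it was supposed to prove. The statement in question is Theorem \ref{thm:john}, the discrete John-type result: for a bounded, centrally symmetric convex $B \subset \mathbb{R}^d$, the lattice set $B \cap \mathbb{Z}^d$ is contained in a centred $d$-GAP of size $O(d)^{3d}|B \cap \mathbb{Z}^d|$. What you wrote is a proof plan for Lemma \ref{lemma:freiman_cyclic}, and in that plan you invoke Theorem \ref{thm:john} as a black box; as an argument for Theorem \ref{thm:john} itself this is circular, and nothing in your text engages with the actual content of that theorem (the geometry of symmetric convex bodies, a John-ellipsoid or basis-reduction argument, and the $O(d)^{3d}$ loss). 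Note also that the paper itself does not prove this theorem: it is quoted from van Hintum and Keevash, so there is no internal proof to match your attempt against, and a correct submission would either reproduce their argument or explicitly treat the result as external.

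Even read charitably as a blind proof of Lemma \ref{lemma:freiman_cyclic}, your plan diverges from the paper at the one step you yourself flag as delicate, and the gap there is real. You propose to rerun the covering argument on a full-rank Freiman-isomorphic copy $A^* \subseteq \mathbb{Z}^{\dim_f(A)}$ and then ``transport'' the covering GAP back to $\mathbb{Z}_n$ through the Freiman isomorphism $A^* \to A$. A Freiman isomorphism is defined only on the set itself; it does not extend to the ambient progression, and properness of the covering GAP does not by itself supply such an extension, so the containment $A \subseteq$ (image of $P^*$) is exactly what remains unproved. The paper avoids this entirely by never leaving $\mathbb{Z}_n$: after the Cwalina--Schoen properness step it pulls $A$ back to $A' \subseteq B' \cap \mathbb{Z}^{d'}$ via the genuine group homomorphism $\phi'$, translates so that $0 \in A'_t$, restricts to the minimal subspace $H$ containing $A'_t$ (whose lattice dimension is at most $\dim_f(A)$ by definition, since $A'_t$ is Freiman-isomorphic to $A$), applies Theorem \ref{thm:john} there, and then maps the resulting centred GAP forward by the homomorphism $\phi'_t \circ \psi^{-1}$, so that $A \subseteq a_0 + \phi'(t) + P''$ holds automatically. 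If you want to salvage your route, you would need to replace the ``transport through the Freiman isomorphism'' step by an argument of this homomorphism-pushforward type.
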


We are now ready to prove Lemma \ref{lemma:freiman_cyclic}.

\begin{proof}[Proof of Lemma \ref{lemma:freiman_cyclic}]
    Applying Theorem \ref{thm:sanders}, we obtain a $d$-dimensional coset progression $P + H \supseteq A$ in $\mathbb{Z}_n$ such that $\mathrm{size}(P)|H| \le e^d |A|$, where $d < K^{1 + o(1)}$. As $\mathbb{Z}_n$ only contains trivial subgroups, from 
    $$
        \textrm{size}(P + H) \le e^{K^{1 + o(1)}} |A| = o(n)
    $$
    we conclude $H = \{0\}$. Therefore, $P + H = P$. Suppose
    $$
        P = \left\{ a_0 + \sum_{i = 1}^{d} n_i a_i \mid -N_i \le n_i \le  N_i\right\}.
    $$
    The set $P$ can be seen as a translate of a convex progression given by the group homomorphism $\phi: \mathbb{Z}^d \to G$, $\phi((x_1, \ldots, x_d)) = \sum_{i = 1}^d x_i a_i$, and the box $B = \prod_{i = 1}^d [-N_i, N_i]$. Let $X = \phi(B \cap \mathbb{Z}^d)$. Note that $a_0 + X = P$ and $\textrm{size}(X) = \textrm{size}(P)$.
    
    Applying Lemma \ref{lemma:cwalina_schoen}, we obtain a $2$-proper convex coset progression $X' + H' \supseteq X$ of dimension $d' \le d$ and size 
    \begin{equation} \label{eq:size_B'}            
        \textrm{size}(X' + H') = \mathrm{size}(X')|H'| \le 2^d d^{O(d)}  \textrm{size}(X) \le e^{K^{1 + o(1)}} |A|.
    \end{equation}
    Again, from the assumption of the lemma we conclude $H' = \{0\}$. Let $B' \subseteq \mathbb{R}^{d'}$ be a closed, centrally symmetric and convex set and $\phi': \mathbb{Z}^d \to G$ a group homomorphism such that $X' = \phi'(B' \cap \mathbb{Z}^{d'})$. Since $X'$ is $2$-proper, the restriction $\phi'\arrowvert_{2B' \cap \mathbb{Z}^{d'}}$ is injective. Setting $A' = \phi'^{-1}(A) \cap B'$, we thus have that $\phi'\arrowvert_{A'}: A' \to A$ is a Freiman isomorphism. 

    Choose arbitrary $t \in A'$. Define the $t$-translation of $\phi'$ as $\phi'_t(x) = \phi'(x + t)$ for $x \in \mathbb{R}^{d'}$, and set $B_t' := (B' - t) - (B' - t)$. Then $B_t'$ is centrally symmetric, closed and convex. Moreover, $B_t' \subseteq 4B'$ thus 
    \begin{equation} \label{eq:B_translated}
        |B_t' \cap \mathbb{Z}^{d'}| \le 4^{d'} |B' \cap \mathbb{Z}^{d'}| \stackrel{\eqref{eq:size_B'}}{\le} e^{K^{1 + o(1)}} |A|.
    \end{equation}
    Set $A_t' = A' - t$, and note that $A_t' = \phi_t'^{-1}(A) \cap (B' - t)$. Importantly, and this was the point of this paragraph: (i) $0 \in A_t'$, and (ii) the restriction  $\phi_t' \arrowvert_{A_t'}$ remains a Freiman isomorphism between $A'$ and $A$.
    
    Let $H$ be the (proper) subspace of $\mathbb{R}^{d'}$ of smallest dimension which contains $A_t'$. Then $B_t' \cap H$ is of full rank in the lattice $\Lambda = H \cap \mathbb{Z}^{d'}$, and as $A_t'$ is Freiman-isomorphic to $A$, the dimension $d''$ of $\Lambda$ satisfies $d'' \le \dim_f(A)$ by the definition. Consider a homomorphism $\psi: H \mapsto \mathbb{R}^{d''}$ such that $\psi(\Lambda) = \mathbb{Z}^{d''}$. As $B_t' \cap H$ is centrally symmetric, closed, and convex, then so is $B'' = \psi(B_t' \cap H)$ and $|B_t' \cap \mathbb{Z}^{d'}| \ge |B'' \cap \mathbb{Z}^{d''}|$. By Theorem \ref{thm:john}, $B'' \cap \mathbb{Z}^{d''}$ is contained in a centred $d''$-GAP $P'$ in $\mathbb{Z}^{d''}$,
    $$
        P' = \left\{ \sum_{i = 1}^{d''} n_i v_i \mid -N_i' \le n_i \le N_i' \right\},
    $$
    of size $O(d'')^{3d''}|B'' \cap \mathbb{Z}^d|$. Using $d'' \le \dim_f(A) < 2K$, the upper bound on the size of $B'' \cap \mathbb{Z}^{d''}$ and \eqref{eq:B_translated}, we obtain
    $$
        O(d'')^{3d''}|B'' \cap \mathbb{Z}^d| \le  e^{K^{1 + o(1)}} |A|. 
    $$
    Finally, this gives us a centred $d''$-GAP in $\mathbb{Z}_n$:
    $$
        P'' = \left\{ \sum_{i = 1}^{d''} n_i \phi'_t(\psi^{-1}(v_i)) \mid -N_i' \le n_i \le N_i' \right\}.
    $$
    We claim that $A \subseteq a_0 + \phi'(t) + P''$. Consider some $a' \in A$ and let $a = a' - a_0$. Then there exists $b \in B'_t \cap \mathbb{Z}^{d'} \cap H$ such that $\phi_t'(b) = a$. As $b' = \psi(b)$ is contained in $P'$, we can write $b' = \sum_{i = 1}^{d''} n_i v_i$ with $-N_i' \le n_i \le N_i$ for each $i \in \{1, \ldots, d''\}$. The linear mapping $\psi$ is a bijection between $\Lambda$ and $\mathbb{Z}^{d''}$, thus
    $$
      b = \sum_{i = 1}^{d''} n_i \psi^{-1}(v_i).  
    $$
    Finally, using the fact that $\phi_t'$ is an affine mapping, we get
    $$
        a' = a_0 + a = a_0 + \phi_t'(b) = a_0 + \phi(t) + \sum_{i = 1}^{d''} n_i \phi'(\psi^{-1}(v_i)).
    $$
\end{proof}

\section{Proof of Theorem \ref{thm:main}}

We follow the proof of Campos, Dahia, and Marciano \cite{campos24cayley}, which is in turn based on the approach of Green \cite{gree05clique}. Lemma \ref{lemma:fingerprint} and Lemma \ref{lemma:freiman_cyclic} are two new ingredients.

\begin{proof}[Proof of Theorem \ref{thm:main}]
    Throughout the proof we set $k := (2 + 4\delta) \log_{1/(1-p)}(n)$. We frequently use 
    $$
        1 - p = n^{-(2 + 4\delta)k}.
    $$
    
    A subset $A \subseteq \mathbb{Z}$ is an independent set in $\Gamma_S$ if $A \hat{+} A  \subseteq S^c$, where $S^c = \mathbb{Z}_n \setminus S$ is the complement of $S$ and $A \hat{+} A = \{ a + a' \colon a, a' \in A, a \neq a'\}$. Therefore, our goal is to show
    $$
        \Pr_{S} \left[ \exists A \in \binom{\mathbb{Z}_n}{k} \colon A \hat{+} A \subseteq S^c \right] \to 0,
    $$
    where $S$ is a $p$-random subset of $\mathbb{Z}_n$. To make this explicit, throughout the proof we use $S_p$ to signify that $S \subseteq \mathbb{Z}_n$ is a $p$-random subset.
    
    Let $\sigma(A) := |A+A| / |A|$ denote the \emph{doubling constant} of $A$. Following \cite{campos24cayley}, we split $k$-subsets of $\mathbb{Z}_n$ into the following three groups:
    \begin{align*}
        \mathcal{X}_1 &= \left\{ A \in \binom{\mathbb{Z}_n}{k} \colon \sigma(A) < k^{1/4} \right\} \\
        \mathcal{X}_2 &= \left\{ A \in \binom{\mathbb{Z}_n}{k} \colon k^{1/4} \le \sigma(A) < \delta k/10 \right\} \\
        \mathcal{X}_3 &= \left\{ A \in \binom{\mathbb{Z}_n}{k} \colon \delta k / 10 \le \sigma(A) \right\}.
    \end{align*}
    Dealing with $\mathcal{X}_2$ and $\mathcal{X}_3$ is identical to \cite{campos24cayley,gree05clique}. We spell out the details for the sake of completeness. The main contribution of \cite{campos24cayley} is a more efficient treatment of the sets in $\mathcal{X}_1$, and it is this part that we improve.

    \paragraph{Sets with small doubling constant.} Consider some $A \in \mathcal{X}_1$. Let $F \subseteq X \subseteq A$ be subsets as given by Lemma \ref{lemma:fingerprint}. Recall that $|X| \ge (1 - \alpha)|A| \ge |A|/2$, and
    \begin{equation} \label{eq:F}
        |F| = C \sqrt{d k} \; \text{ and } \;|F \hat{+} F| \ge (1 - \alpha) \frac{(d+1)k}{2} - |F| \ge (1 - 2\alpha)\frac{(d+1)k}{2},
    \end{equation}
    for some $C = C(\alpha)$ and $d = \dim_f(X)$. From $|X| \ge |A|/2$ we conlcude $\sigma(X) \le 2\sigma(A) < 2k^{1/4}$. By Lemma \ref{lemma:freiman_cyclic} applied with $X$, there exists a $d$-GAP $P \supset X$ of size at most $\exp(k^{1/4 + o(1)})$ (we use $o(1)$ in the exponent and the fact that $k$ is sufficiently large to subsume all lower order terms). We can further assume that all the parameters $N_1, \ldots, N_{\dim_f(X)}$ of $P$ are powers of $2$, and that the size of $P$ is exactly $\exp(k^{1/4 + o(1)})$. Let us denote with $\mathcal{A}_d$ the family of all such $d$-GAPS, and $\mathcal{F}(P)$ all subsets $F \subseteq P$ satisfying \eqref{eq:F}. Finally, note that $\Pr[F \hat{+} F \subseteq S_p^c] = (1 - p)^{|F \hat{+} F|}$.
    
    Based on preceding observations, we arrive at the following union bound:
    \begin{align*}
        \Pr\left[ \exists A \in \mathcal{X}_1 \colon A \hat{+} A \subseteq S_p^c \right] &\le 
        \sum_{d = 1}^{4k^{1/4}} \sum_{P \in \mathcal{A}_d} \sum_{F \in \mathcal{F}(P)} \Pr[F \hat{+} F \subseteq S_p^c] \\
        &\le \sum_{d = 1}^{4k^{1/4}} n^{d + 1} \binom{k^{1/4 + o(1)}}{d - 1} \binom{e^{k^{1/4 + o(1)}}}{C (d k)^{1/2}} (1 - p)^{(1 - 2\alpha) (d+1) k / 2}.
    \end{align*}
    The first two terms count the number of $d$-GAPs of size $\exp(k^{1/4 + o(1)})$, as well as all the parameters $N_1, \ldots, N_d$,  being a power of two. Recalling $k = (2 + \delta) \log_{1/(1-p)}(n)$, this further simplifies to
    $$
        \sum_{d_f = 1}^{4 k^{1/4}} n^{d+1} e^{k^{3/4 + o(1)} \sqrt{d}} n^{-(1 - 3\alpha)(1 + \delta) d} e^{- \alpha p d k / 2}.
    $$
    As $p d k \gg k^{3/4 + o(1)} \sqrt{d}$ for $p \ge (\log n)^{-1/3 + o(1)}$ (with the latter $o(1)$ decaying significantly slower), and $(1 - 3\alpha)(1 + \delta) > 1$, we conclude that the previous sum vanishes.


    \paragraph{Sets with large doubling constant.}
    By \cite[Proposition 23]{gree05clique} (see \cite[Section 8.2]{campos24cayley} for details), there are at most $n^{(2 + 2\delta)m/k} k^{4k}$ sets $A \in \mathcal{X}_2$ such that $|A \hat{+} A| = m$. We thus get the following union bound:
    \begin{align*}
        \Pr[\exists A \in \mathcal{X}_2 \colon A \hat{+} A \subseteq S_p^c] &\le \sum_{m = k^{1 + 1/4}}^{\delta k / 10} n^{(2 + 2\delta)m/k} k^{4k} (1 - p)^{(1 - \delta/2)m + \delta m / 2} \\
        &\le \sum_{m = k^{1 + 1/4}}^{\delta k /10} n^{(2 + 2\delta)m/k} e^{4k \log(k)} n^{-(1 - \delta)(2+4\delta)m/k} e^{-\delta p m / 2}.
    \end{align*}
    As $p k^{4/3} \gg k \log(k)$ for $p \ge (\log n)^{-1/3 + o(1)}$, the sum vanishes.

    \paragraph{Sets with linear doubling constant.} By \cite[Proposition 5.1]{green16counting} (see \cite[Section 8.3]{campos24cayley} for details), there are at most $n^{(2 + 2\delta)m/k}$ sets $A \in \mathcal{X}_3$ such that $|A \hat{+} A| = m$. Therefore,
    $$
        \Pr[\exists A \in \mathcal{X}_2 \colon A \hat{+} A \subseteq S_p^c] \le \sum_{m = \delta k / 10}^k n^{(2 + 2\delta)m/k} (1 - p)^m = \sum_{m = \delta k / 10}^k n^{(2 + 2\delta)m/k} n^{-(2 + 4\delta)m/k} \to 0.
    $$
\end{proof}

\paragraph{Acknowledgment.} The author thanks Gabriel Dahia and Jo\~{a}o Pedro Marciano for timely and valuable comments.

\bibliographystyle{abbrv}
\bibliography{references}

\end{document}